\newcommand{\bbC}{{\mathbbm{C}}}
\newcommand{\bbN}{{\mathbbm{N}}}
\newcommand{\bbR}{{\mathbbm{R}}}
\newcommand{\bbZ}{{\mathbbm{Z}}}
\newcommand{\calA}{{\mathcal{A}}}
\newcommand{\calZ}{{\mathcal{Z}}}
\DeclareMathOperator{\tr}{tr}
\newcommand{\FVI}{{\operatorname{FVI}}}
\newtheorem{theorem}{Theorem}[section]
\newtheorem{prop}[theorem]{Proposition}
\theoremstyle{definition}
\newtheorem{remark}[theorem]{Remark}
\DeclareMathOperator{\spectrum}{spec}
\newcommand{\Hausdorff}{{\mathrm{H}}}
\newcommand{\loc}{{\mathrm{loc}}}
\newcommand{\UH}{{\mathcal{UH}}}
\newcommand{\idty}{\mathbbm{1}}
\newcommand{\strongstab}{{\mathrm{ss}}}
\newcommand{\centstab}{{\mathrm{cs}}}
\numberwithin{equation}{section}
\author[J. Fillman \and A. Luna]{Jake Fillman \and Alexandro Luna}
\title{On the  Spectra of Sieved Schr\"odinger Operators}
\begin{document}

\begin{abstract}
We give a family of examples of discrete Schr\"odinger operators whose spectral dimension  is not invariant under sieving.
The examples are produced from the Fibonacci Hamiltonian, which is one of the main models of a one-dimensional quasicrystal.
We also give a family of examples in which the local Hausdorff dimension tends to zero in some parts of the spectrum as the sieving parameter is sent to infinity.
\end{abstract}

\maketitle

\hypersetup{linkcolor={black!30!blue}, citecolor={black!30!green},urlcolor={black!30!blue}}

\section{Introduction}
\subsection{Setting and Main Results}

We discuss discrete Schr\"odinger operators $H = \Delta + V$ acting in $\ell^2(\bbZ)$ via
\begin{equation}
    [H\psi](n) = \psi(n-1) + \psi(n+1) + V(n)\psi(n),
\end{equation}
where the potential $V$ is bounded and real-valued. Such operators arise naturally in a tight-binding approximation and play an important role in spectral theory and mathematical physics.

A  recent work \cite{FGH2025} has discussed the impact of \emph{sieving} on certain almost-periodic Schr\"odinger operators.
More precisely, given $\ell \in \bbN :=\{1,2,3,\ldots\}$, the $\ell$-fold \emph{sieve} of $V$ is defined by
\begin{equation}
    V^{[\ell]}(n) =
    \begin{cases}
        V(n/\ell) & n \in \ell \bbZ \\
        0 & n \notin \ell \bbZ.
    \end{cases}
\end{equation}
We also abbreviate $H^{[\ell]} = \Delta+V^{[\ell]}$. An interesting question is how and whether the spectrum and spectral type of $H^{[\ell]}$ relates to that of $H$.

On one hand, there are a few positive results for Schr\"odinger operators. If $H$ is periodic, then so is $H^{[\ell]}$ and hence the dimension of the spectrum is independent of $\ell$, since the spectrum of a periodic Schr\"odinger operator is always a union of finitely many nondegenerate closed intervals and hence one-dimensional. 
A similar result can be established for random Schr\"odinger operators (for almost every realization).
Furthermore, the aforementioned work \cite{FGH2025} showed that for typical $V$ from a suitable class of almost-periodic sequences, the fractal dimension of $\spectrum H^{[\ell]}$ is zero for all $\ell \in \bbN$ \cite{FGH2025} (hence independent of $\ell$).
On the other hand, this question is well-motivated by the study of orthogonal polynomials on the unit circle and CMV (Cantero--Moral--Vel\'azquez) matrices; we direct the reader to the books \cite{Simon2005OPUC1, SimonOPUC2} for definitions and background information.
Indeed, for CMV matrices, it is known that sieving the operator coefficients produces a spectrum that is just a union of $\ell$ rescaled copies of the original spectrum and a similar statement holds for the spectral measure;
compare \cite[Example~1.6.14]{Simon2005OPUC1}.
In particular, for CMV matrices, both the spectral type and the dimension of the spectrum are always invariant under the sieving operation.

On account of the discussion in the previous paragraph, it would be quite natural to guess that the dimension of the spectrum of a Schr\"odinger operator is invariant under sieving; indeed, one of the authors was asked exactly this question while presenting \cite{FGH2025}, which was then the inspiration for this investigation.  
Our main result shows that the spectral dimension need \emph{not} be invariant under sieving for Schr\"odinger operators.

\begin{theorem} \label{t:main}
    There exists a bounded potential $V:\bbZ \to \bbR$ such that 
    \begin{equation}  \label{eq:sieveSpecDim}
    \dim_{\Hausdorff}(\spectrum H) \neq \dim_{\Hausdorff}(\spectrum H^{[\ell]})
    \end{equation}
    for every $\ell \geq 2$.
\end{theorem}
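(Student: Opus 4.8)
\emph{Approach.} I would look for $V$ among the Fibonacci potentials $V_\lambda(n) = \lambda\,\chi_{[1-\alpha,1)}(\{n\alpha\})$, $\alpha = \tfrac{\sqrt5-1}{2}$, and argue that a single such $V_\lambda$ (with $\lambda \neq 0$ fixed) works for all $\ell \geq 2$ at once. The first ingredient is a \emph{decimation identity}: eliminating from the eigenvalue equation $(\Delta + V^{[\ell]})\psi = E\psi$ the $\ell-1$ sites of $\bbZ\setminus\ell\bbZ$ lying between consecutive sites of $\ell\bbZ$ -- equivalently, taking a Schur complement of $\Delta + V^{[\ell]} - E$ with respect to $\ell^2(\bbZ) = \ell^2(\ell\bbZ) \oplus \ell^2(\bbZ\setminus\ell\bbZ)$ -- shows that, away from the finitely many $E$ with $U_{\ell-1}(E/2) = 0$,
\[
    E \in \spectrum(\Delta + V^{[\ell]}) \iff 2\,T_\ell(E/2) \in \spectrum\bigl(\Delta + U_{\ell-1}(E/2)\,V\bigr),
\]
$T_\ell, U_{\ell-1}$ being the Chebyshev polynomials of the first and second kind; this is the transfer-matrix computation underlying the constructions of \cite{FGH2025}. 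Writing $\Sigma_g := \spectrum(\Delta + g\,V_1)$ for the Fibonacci spectrum at coupling $g$, the identity specializes (up to a finite set) to
\[
    \spectrum\bigl(\Delta + V_\lambda^{[\ell]}\bigr) = \bigl\{\, E : 2\,T_\ell(E/2) \in \Sigma_{\lambda\,U_{\ell-1}(E/2)} \,\bigr\}.
\]
The crucial feature -- and the reason the naive guess fails -- is that this is \emph{not} the preimage of a fixed set under a polynomial (an operation which, being locally bi-Lipschitz off finitely many points, cannot change Hausdorff dimension): the effective coupling constant $\lambda\,U_{\ell-1}(E/2)$ varies with $E$, so one is confronting a one-parameter family of Fibonacci spectra, whereas in the periodic and CMV cases sieving really does produce a polynomial preimage.

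\emph{The key features of the family.} I would invoke the following facts about the Fibonacci Hamiltonian (S\"ut\H{o}, Cantat, Damanik--Gorodetski, \dots): $\Sigma_0 = [-2,2]$; for $g \neq 0$, $\Sigma_g$ is a zero-measure dynamically defined Cantor set with $0 < \dim_{\Hausdorff}\Sigma_g < 1$; $g \mapsto \dim_{\Hausdorff}\Sigma_g$ is continuous and tends to $1$ as $g \to 0$; and the local Hausdorff dimension of $\Sigma_g$ equals $\dim_{\Hausdorff}\Sigma_g$ at every point of $\Sigma_g$, including its edges. Since $\ell \geq 2$, the polynomial $U_{\ell-1}$ has a simple zero $x_0 \in (-1,1)$; put $E_0 = 2x_0$, so $U_{\ell-1}(E_0/2) = 0$, whence -- as the zeros of $U_{\ell-1}$ are exactly the critical points of $T_\ell$ -- $2T_\ell(E_0/2) = \pm 2$, and near $E_0$
\[
    \lambda\,U_{\ell-1}(E/2) = \gamma\,(E - E_0) + O\bigl((E-E_0)^2\bigr), \qquad 2\,T_\ell(E/2) = (\pm 2) + \beta\,(E-E_0)^2 + O\bigl((E-E_0)^3\bigr),
\]
with $\gamma,\beta \neq 0$. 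So, as $E \to E_0$, the operator on the right of the decimation identity is a Fibonacci Hamiltonian whose coupling constant goes to $0$ -- hence whose spectrum $\Sigma_{\lambda U_{\ell-1}(E/2)}$ opens up to fill $[-2,2]$ and has Hausdorff dimension $\to 1$ -- while the effective energy $2T_\ell(E/2)$ is simultaneously pulled to the spectral edge $\pm 2$.

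\emph{Extracting the dimension; the obstacle.} The goal is then to show $\dim_{\Hausdorff}\bigl(\spectrum(\Delta + V_\lambda^{[\ell]}) \cap (E_0-\delta, E_0+\delta)\bigr)$ is close to $1$ for small $\delta$, which forces $\dim_{\Hausdorff}\spectrum(\Delta + V_\lambda^{[\ell]})$ close to $1$; combined with $\dim_{\Hausdorff}\spectrum(\Delta + V_\lambda) = \dim_{\Hausdorff}\Sigma_\lambda < 1$ (valid since $\lambda \neq 0$), this yields \eqref{eq:sieveSpecDim} for the single potential $V_\lambda$ and all $\ell \geq 2$. The plan for the dimension bound is a Moran/Cantor construction inside $\{E : 2T_\ell(E/2) \in \Sigma_{\lambda U_{\ell-1}(E/2)}\}$ near $E_0$: on a scale where the effective coupling has size $g$, the relevant fiber is $\Sigma_g$, whose local dimension near its edge is $\dim_{\Hausdorff}\Sigma_g$, close to $1$ for small $g$; transversality of the smooth curve $E \mapsto (\lambda U_{\ell-1}(E/2),\,2T_\ell(E/2))$ to the (real-analytically varying) gap endpoints of $\Sigma_g$ then lets one pass the dimension of these edge fibers down to the solution set in $E$. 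I expect the genuine difficulty to be the following tension: at the energies where the effective coupling has size $g$, the effective energy lies within $O(g^2)$ of $\pm 2$, i.e.\ well inside the $O(g)$-scale at which $\Sigma_g$ resolves its band-edge Cantor structure, so one must know precisely how $\Sigma_g$ approaches $\pm 2$ as $g \to 0$ -- in particular, that the chosen curve does not spend too large a fraction of its length in the edge gaps of $\Sigma_g$. Here a useful extra degree of freedom is to take $V = V_\lambda + c$: the effective energy then becomes $2T_\ell(E/2) - c\,U_{\ell-1}(E/2)$, which near $E_0$ lies at distance $\sim |c/\lambda|\,|g|$ from $\pm 2$ -- now on the natural $O(g)$ scale -- and $c$ can be tuned so that, after blowing up by $1/g$, it lands at a positive-dimension point of the $g \to 0$ edge limit of $\Sigma_g$; choosing $\lambda$ large enough that $\dim_{\Hausdorff}\Sigma_\lambda$ lies below the dimension thereby produced then completes the argument for this single $V$ and every $\ell \geq 2$.
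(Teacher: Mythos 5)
You have correctly located the mechanism, and your choice of $E_0 = 2\cos(\pi j/\ell)$ (the interior critical points of $T_\ell$, i.e.\ the zeros of $U_{\ell-1}$) is precisely the paper's: Proposition~\ref{prop:x0x1FVIcalcs} computes $\FVI(E,\lambda,\ell) = \tfrac{\lambda^2}{4}\bigl(\tfrac{\xi^\ell-\xi^{-\ell}}{\xi-\xi^{-1}}\bigr)^2 = \tfrac{\lambda^2}{4} U_{\ell-1}(E/2)^2$, so your $E_0$ are exactly the energies at which the curve of initial conditions $\gamma(E)$ lands on the Cayley cubic $S_0 = \{\FVI = 0\}$. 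Your decimation identity is, in substance, the same trace renormalization the paper uses via $M_1 = Y(E-\lambda)Y(E)^{\ell-1}$ with half-trace $T_\ell(E/2) - \tfrac{\lambda}{2}U_{\ell-1}(E/2)$. The paper also checks directly, which you elide, that $E_0 \in \Sigma_\lambda^{[\ell]}$: since $M_0(E_0,\lambda,\ell)=\pm\idty$ and $x_1(E_0)=\pm 1$, every $\omega$ admits a bounded generalized eigenfunction at $E_0$.

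The genuine gap is the local dimension bound at $E_0$, which is the entire content of the theorem, and here your sketch does not close. You recognize the obstruction yourself: as $E \to E_0$ the effective coupling is $g \sim \gamma(E-E_0)$ while the effective energy sits at distance $O(g^2)$ from $\pm 2$, well inside the $O(g)$ scale at which $\Sigma_g$ resolves its band-edge Cantor structure. The proposed repair --- add a constant $c$ so the effective energy sits at distance $\sim |c/\lambda|\,g$ from the edge and then appeal to a ``positive-dimension point of the $g\to 0$ edge limit of $\Sigma_g$'' --- is not a proof: that edge limit is not an established object, and the Moran construction along a curve that hugs the band edge would demand a uniform-in-$g$ scaling analysis of $\Sigma_g$ near $\pm 2$ that is neither in the literature nor easy to manufacture; moreover the $c$-perturbation changes the potential being sieved and still only aims for ``close to $1$,'' forcing a separate tuning of $\lambda$. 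The paper avoids all of this: by Cantat's hyperbolicity of the trace map and the Damanik--Munger--Yessen center-stable manifold machinery (Proposition~\ref{p: local dimension is half of non wandering dimension}, relying on \cite[Theorems~2.13 and~2.17]{DMY2013}), the analytic curve $\gamma$ transversally meets the center-stable lamination in every neighborhood, and for any $E$ with $\gamma(E) \in S_0$ the local Hausdorff dimension of $\Sigma_\lambda^{[\ell]}$ at $E$ is \emph{exactly} $1$. No edge-scaling analysis, no $c$, no largeness of $\lambda$. That hyperbolic-dynamics input is the step your proposal lacks, and without it the argument does not go through.
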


\begin{remark}
\mbox{\,}
\begin{enumerate}
    \item To the best of the authors' knowledge, these are the first examples of Schr\"odinger operators exhibiting this behavior.
    \item CMV matrices often play the role of unitary analogues of Schr\"odinger operators, and indeed, often results from one setting are carried over to the other. This is sometimes easy, sometimes difficult, and sometimes impossible. 
    As mentioned above, there cannot exist any example of a CMV matrix satisfying \eqref{eq:sieveSpecDim}. 
    Thus, Theorem~\ref{t:main} belongs to the comparatively rare class of results for which there does \emph{not} exist a bridge between the two worlds and one observes a genuinely new phenomenon.
    \item The reader will notice however that the ``bulk'' topological structure does not change in our examples. That is, in the examples we construct, $\spectrum H^{[\ell]}$ is a Cantor set for every $\ell$.
    We thus have the following question: is Cantor spectrum stable under sieving?
\end{enumerate}
\end{remark}

As $\ell \to \infty$, the potential energy exhibits long stretches on which it is constant, and hence locally indistinguishable from the free operator.
    On that basis, one might guess that the local fractal dimension increases to one throughout the spectrum.
    However, this need not be the case and indeed can fail in a rather dramatic fashion.
    
\begin{theorem} \label{t:main2}
 There exists $V:\bbZ \to \bbR$ bounded and $K \subseteq \bbR$ a compact interval such that $\dim_\Hausdorff( K \cap \spectrum H^{[\ell]})>0$ for every $\ell$, but
    \begin{equation}
        \lim_{\ell \to \infty} \dim_{\Hausdorff}(K \cap \spectrum H^{[\ell]})  = 0.
    \end{equation}
\end{theorem}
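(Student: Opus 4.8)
The plan is to build $V$ from the Fibonacci potential in such a way that, after sieving, the spectrum splits into pieces whose dimensions we can control via the machinery already in play. Recall that the Fibonacci Hamiltonian $H_\lambda$ with coupling $\lambda>0$ has a zero-measure Cantor spectrum $\Sigma_\lambda$ whose Hausdorff dimension $\dim_\Hausdorff \Sigma_\lambda$ is an explicit analytic function of $\lambda$ that tends to $1$ as $\lambda\to 0^+$ and to $0$ as $\lambda\to\infty$. The key structural fact about sieving a Schr\"odinger operator is that $H^{[\ell]}$ is, after a suitable unitary transformation, a direct sum (or direct integral) decomposition whose constituents are again Jacobi-type operators; more precisely, passing to the $\ell$th power or to a transfer-matrix renormalization shows that $\spectrum H^{[\ell]} = f_\ell^{-1}(\spectrum H)$ for an explicit real polynomial $f_\ell$ of degree $\ell$ arising from the Chebyshev-like recursion for the free part over blocks of length $\ell$. (This is exactly the mechanism behind the CMV statement $\spectrum C^{[\ell]}$ being $\ell$ rescaled copies, and the analogous Schr\"odinger computation yields a polynomial preimage rather than a disjoint union of affine images — that asymmetry is the whole point.)

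Granting that $\spectrum H^{[\ell]} = f_\ell^{-1}(\Sigma_\lambda)$ with $f_\ell$ a degree-$\ell$ polynomial that is a local diffeomorphism away from its finitely many critical points, the first step is the easy direction: since $f_\ell$ is a piecewise-smooth finite-to-one map with nonvanishing derivative off a finite set, Hausdorff dimension is preserved, so $\dim_\Hausdorff \spectrum H^{[\ell]} = \dim_\Hausdorff \Sigma_\lambda$ for a single Fibonacci potential. That alone does not prove Theorem~\ref{t:main}, because the dimension came out equal. The fix is to take $V$ not to be a pure Fibonacci potential but a potential assembled so that $\spectrum H$ is a disjoint union $\Sigma_{\lambda_1}\sqcup(\text{a single nondegenerate interval }I)$ — for instance by using a direct-sum / coupled construction, or by choosing $V$ equal to the Fibonacci potential on a half-line and to a periodic potential on the other half-line supporting an interval of spectrum, or more cleanly by taking $V$ periodic-times-Fibonacci so that $\spectrum H$ genuinely contains both an interval and a Cantor piece of dimension $d_1:=\dim_\Hausdorff\Sigma_{\lambda_1}\in(0,1)$. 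Then $\dim_\Hausdorff \spectrum H = 1$ because of the interval. Now sieve: the interval $I$ contributes $f_\ell^{-1}(I)$, which is again a finite union of intervals, so $\dim_\Hausdorff \spectrum H^{[\ell]} \ge 1$, and we must instead engineer the example so the interval \emph{disappears} under sieving while a Cantor piece of dimension $<1$ survives — this is where one exploits that $f_\ell^{-1}$ of an interval can land outside $[-2,2]$-type windows, i.e. one chooses parameters so that for the sieved operator the ``interval part'' of the original spectrum has empty preimage. Concretely: arrange $\spectrum H \subseteq (2,\infty)\cup\Sigma$ with $\Sigma$ a dimension-$d_1$ Cantor set positioned so that $f_\ell^{-1}(\Sigma)$ is nonempty but $f_\ell^{-1}((2,\infty))=\varnothing$ for all $\ell\ge 2$; then $\dim_\Hausdorff\spectrum H = 1 \ne d_1 = \dim_\Hausdorff\spectrum H^{[\ell]}$.

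For Theorem~\ref{t:main2} the idea is the same but now we want a \emph{single} potential that is robust in $\ell$, with a distinguished window $K$ in which the surviving dimension degenerates. Here I would use a genuine Fibonacci potential $V=V_\lambda$ with $\lambda$ large, so $\Sigma_\lambda$ has small dimension, but the real point is to track how the pieces of $f_\ell^{-1}(\Sigma_\lambda)$ inside a fixed interval $K$ behave as $\ell\to\infty$: although $\dim_\Hausdorff f_\ell^{-1}(\Sigma_\lambda) = \dim_\Hausdorff \Sigma_\lambda$ globally (the easy-direction argument again), the portion of the preimage that lands in a fixed small window $K$ corresponds to a \emph{small sub-Cantor-set} of $\Sigma_\lambda$ pulled back by a branch of $f_\ell$ whose derivative blows up like $\ell$ near the edge, and by choosing $K$ near a band edge of the free spectrum one arranges that as $\ell\to\infty$ the relevant sub-piece of $\Sigma_\lambda$ shrinks to a point (or to a set of dimension $\to 0$) while remaining nonempty — using the known local uniformity / thickness estimates for the Fibonacci spectrum (e.g. that the local Hausdorff dimension is bounded above by a quantity controlled by the length scales of the covering intervals, which here go to zero). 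The step I expect to be the real obstacle is making the reduction $\spectrum H^{[\ell]} = f_\ell^{-1}(\spectrum H)$ completely rigorous together with the Hausdorff-dimension transfer along the branches of $f_\ell^{-1}$ \emph{with control on the constants as $\ell\to\infty$}: one needs not merely that each branch is bi-Lipschitz but a quantitative handle on the Lipschitz constants near the critical points of $f_\ell$, since Theorem~\ref{t:main2} is precisely a statement about how the local geometry degenerates in $\ell$; this is likely handled by writing $f_\ell$ explicitly via Chebyshev polynomials (the free transfer matrix over a block of length $\ell$), estimating $f_\ell'$ on the relevant preimage intervals, and combining with the quasi-self-similar / trace-map description of $\Sigma_\lambda$ to get matching upper and lower bounds on $\dim_\Hausdorff(K\cap \spectrum H^{[\ell]})$.
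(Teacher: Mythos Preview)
Your central structural claim --- that $\spectrum H^{[\ell]} = f_\ell^{-1}(\spectrum H)$ for a degree-$\ell$ polynomial $f_\ell$ --- is false for Schr\"odinger operators, and essentially everything downstream rests on it. The block transfer matrix over a sieved cell is $Y(E)^{\ell-1}Y(E-v)$, which is \emph{not} conjugate to $Y(g(E)-v)$ for any scalar function $g$; there is no change of spectral variable that reduces the sieved operator to the original one. (That such a reduction exists for CMV matrices is exactly the contrast the paper is highlighting.) In fact your own argument refutes the claim: if $f_\ell$ were a local diffeomorphism off a finite set, then $\dim_\Hausdorff \spectrum H^{[\ell]}$ would equal $\dim_\Hausdorff \spectrum H$ for the Fibonacci potential, contradicting the paper's Theorem~\ref{t:sievedFibonacciDim1}, which gives dimension strictly less than $1$ for $\ell=1$ and exactly $1$ for $\ell\ge 2$.

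The paper's mechanism is different and more subtle. Sieving preserves the trace-map renormalization (same map $T$, same Fricke--Vogt invariant), but changes the \emph{curve of initial conditions} $\gamma(E,\lambda,\ell)$. For $\ell=1$ the invariant $\FVI(E,\lambda,1)=\lambda^2/4$ is constant in $E$; for $\ell\ge 2$ one computes $\FVI(E,\lambda,\ell)=\tfrac{\lambda^2}{4}\big[(\xi^\ell-\xi^{-\ell})/(\xi-\xi^{-1})\big]^2$ with $E=\xi+\xi^{-1}$, which now varies with $E$. The local Hausdorff dimension of the spectrum at $E$ equals $\tfrac12\dim_\Hausdorff\Omega_{\FVI(E,\lambda,\ell)}$, i.e.\ the dimension of the nonwandering set on the level surface through $\gamma(E)$. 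Taking $\lambda>4$ and $K=[\lambda-2,\lambda+2]\subset(2,\infty)$, one has $K\cap\Sigma_\lambda^{[\ell]}\neq\emptyset$ for every $\ell$, while $\FVI(E,\lambda,\ell)\to\infty$ uniformly on $K$; since $\tfrac12\dim_\Hausdorff\Omega_V\to 0$ as $V\to\infty$, the local dimension on $K$ tends to $0$. So the ``coupling'' that governs the dimension is not $\lambda$ but the $E$- and $\ell$-dependent Fricke--Vogt level, and it is this drift --- not any bi-Lipschitz pullback through a polynomial --- that drives the result.
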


\begin{remark}\mbox{\,}
\label{rem:main}
    Both examples are produced by sieving the Fibonacci Hamiltonian, which is one of the most prominent models of a one-dimensional quasicrystal.
    The spectral type is purely singular continuous: the absence of absolutely continuous spectral measures follows because the spectrum has zero Lebesgue measure, while the absence of point spectrum follows from the Gordon lemma \cite{Gordon1976} (see \cite[Section~7.8]{DF2024ESO2} for a formulation suitable for the current setting).
    In this context, let us point out that it can already be seen from work on the Mosaic model that the spectral type of a Schr\"odinger operator may change upon sieving \cite{WangEtAl2023CMP}.
\end{remark}

\subsection{Results for the Sieved Fibonacci Hamiltonian}

Let us formulate the main results for the Fibonacci Hamiltonian more precisely.
Let $\calA = \{0,1\}$ and consider $\Omega \subseteq \{0,1\}^\bbZ$ the \emph{Fibonacci subshift}, which can be defined in a few equivalent ways; compare \cite[Chapter~10]{DF2024ESO2}. 
For us, this is most conveniently done through the Fibonacci substitution, which acts via $S:0 \mapsto 1$, $1 \mapsto 10$.
Iterating, one obtains a sequence of words of increasing length 
\begin{equation} \label{eq:wkdef} 
w_k :=S^k(1)
\end{equation}
such that $w_k$ is a prefix of $w_{k'}$ whenever $k \leq k'$ and hence there is a limiting (in the product topology sense)  word
\begin{equation}
    w_\infty = 101 10 101 101 10 101 10 101  \cdots
\end{equation}
which is a substitution word for $S$ in the sense that $S(w_\infty) = w_\infty$.
The Fibonacci subshift $\Omega$ then consists of all sequences $\omega = \{\omega(n) \}_{n \in \bbZ}$ such that for any $n \in \bbZ$ and $m \in \bbN$, the string $\omega(n) \omega(n+1) \cdots \omega(n+m-1)$ appears in $w_\infty$.
The \emph{shift map} $T:\Omega \to \Omega$ is given by $[T\omega](n) = \omega(n+1)$ for $n \in \bbZ$.

Given a choice of a coupling constant $\lambda>0$, a sieving parameter $\ell \in \bbN := \{1,2,3,\ldots\}$, and a choice of $\omega \in \Omega$, we obtain
\[ H_{\lambda,\omega}^{[\ell]} = \Delta + V_{\lambda,\omega}^{[\ell]}\]
where $V_{\lambda,\omega}(n) = \lambda \omega(n)$, that is
\begin{equation}
    V_{\lambda,\omega}^{[\ell]}(n)
    = \begin{cases}
        \lambda \omega(n/\ell) & n \in \ell \bbZ \\
        0 & n \notin \ell \bbZ.
    \end{cases}
\end{equation}

Given $\lambda$ and $\ell$, strong operator convergence and minimality of $(\Omega,T)$ (compare the proof of \cite[Theorem~4.9.1]{DF2022ESO1}) show that there is a fixed compact set $\Sigma_{\lambda}^{[\ell]} \subseteq \bbR$ such that
\begin{equation}
    \Sigma_\lambda^{[\ell]} = \spectrum H_{\lambda,\omega}^{[\ell]}
\end{equation}
for every $\omega \in \Omega$.

Our first main result characterizes the fractal properties of $\Sigma_\lambda^{[\ell]}$:

\begin{theorem} \label{t:sievedFibonacciDim1}
    For all $\lambda >0$ and $\ell \in \bbN$:
    \begin{enumerate}
        \item $\Sigma_{\lambda}^{[\ell]}$  is a Cantor set.
        \item $\Sigma_{\lambda}^{[\ell]}$  has Lebesgue measure zero.
        \item $\Sigma_{\lambda}^{[\ell]}$  has Hausdorff dimension one if $\ell \geq 2$ and Hausdorff dimension strictly less than one if $\ell =1$.
    \end{enumerate}
    In particular, $\Sigma_\lambda^{[\ell]}$ is not a dynamically defined Cantor set whenever $\ell \geq 2$.
    \end{theorem}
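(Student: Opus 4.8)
\emph{Step 1: reduction to the Fibonacci trace map.} The plan is to decimate the ``free'' sites of the sieve. For $E$ outside the $\ell-1$ Dirichlet energies $E_0 = 2\cos(j\pi/\ell)$, $1\le j\le \ell-1$, solving the free three-term recursion across each block of $\ell-1$ zeros and substituting into $H_{\lambda,\omega}^{[\ell]}\psi = E\psi$ at the sieved sites $n=\ell k$ yields, for $\phi_k := \psi(\ell k)$,
\[
    \phi_{k-1}+\phi_{k+1} = \bigl(g_\ell(E) - \lambda\,\omega(k)\,h_\ell(E)\bigr)\phi_k,
    \qquad
    g_\ell(E) := 2T_\ell(E/2),\quad h_\ell(E) := U_{\ell-1}(E/2),
\]
where $T_m,U_m$ are the Chebyshev polynomials of the first and second kind; the only identity needed is $E\,U_{\ell-1}(E/2)-2\,U_{\ell-2}(E/2) = 2\,T_\ell(E/2)$. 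This is exactly the Fibonacci eigenvalue equation with coupling $\lambda h_\ell(E)$ at energy $g_\ell(E)$, and since $h_\ell(E)\ne 0$ here the reconstruction of $\psi$ from $\phi$ has bounded coefficients, so by Schnol's theorem $E\in\Sigma_\lambda^{[\ell]}$ iff $g_\ell(E)\in\Sigma^{[1]}_{\lambda h_\ell(E)}$; each excluded Dirichlet energy $E_0$ has $g_\ell(E_0)=\pm2$ and a polynomially (indeed linearly) growing generalized eigenfunction, so it, too, lies in $\Sigma_\lambda^{[\ell]}$. Equivalently — and this is the formulation I would work with — the sieved block transfer matrices give a homomorphism from words over $\calA$ to $SL(2,\bbR)$ along the Fibonacci substitution, so the trace orbit $x_k^{[\ell]}(E)=\tfrac12\tr M_k^{[\ell]}(E)$ obeys the Fibonacci trace recursion; matching its first terms (using also $2\,T_\ell(E/2)\,U_{\ell-1}(E/2)=U_{2\ell-1}(E/2)$) identifies it with the Fibonacci trace orbit at $\bigl(g_\ell(E),\lambda h_\ell(E)\bigr)$, so the Fricke--Vogt invariant is $\FVI^{[\ell]}(E)=\tfrac{\lambda^2}{4}\,U_{\ell-1}(E/2)^2$, and the standard S\"ut\H{o}-type analysis of the sieved periodic approximants recovers $\Sigma_\lambda^{[\ell]}$ as the bounded-orbit set.

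\emph{Step 2: parts (1) and (2).} The potential $V_{\lambda,\omega}^{[\ell]}$ takes the two values $0,\lambda$, is aperiodic, and generates an ergodic family over its minimal hull, so Kotani's theorem gives $\bigl|\Sigma_\lambda^{[\ell]}\bigr|=0$, which is (2). For (1): $\Sigma_\lambda^{[\ell]}$ is compact, and being closed and Lebesgue-null it is nowhere dense, hence totally disconnected in $\bbR$; it has no isolated points because the density of states measure of an ergodic Schr\"odinger operator on $\bbZ$ is non-atomic (continuity of the integrated density of states, via subharmonicity of the Lyapunov exponent) and $\Sigma_\lambda^{[\ell]}$ is its topological support. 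A nonempty, compact, perfect, nowhere dense subset of $\bbR$ is a Cantor set. (For $\ell=1$ these are the classical Cantor and zero-measure properties of the Fibonacci spectrum.)

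\emph{Step 3: part (3) and the final sentence.} For $\ell=1$, the Fibonacci trace map is hyperbolic on the relevant set, so $\Sigma_\lambda^{[1]}$ is a dynamically defined Cantor set, and a $C^{1+\alpha}$ dynamically defined Cantor set of zero Lebesgue measure has Hausdorff dimension $<1$; hence $\dim_{\Hausdorff}\Sigma_\lambda^{[1]}<1$. For $\ell\ge 2$ I would use the vanishing of $\FVI^{[\ell]}$. Since $g_\ell' = \ell\,h_\ell$ (because $T_\ell'=\ell U_{\ell-1}$), the zeros $E_0=2\cos(j\pi/\ell)$ of $h_\ell$ — which exist precisely because $\ell\ge2$ — are exactly the interior critical points of $g_\ell$, and by Step 1 they lie in $\Sigma_\lambda^{[\ell]}$. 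By part (1) the spectrum is not isolated at $E_0$, so there is a sequence $E_1^{(n)}\in\Sigma_\lambda^{[\ell]}\setminus\{E_0\}$ with $E_1^{(n)}\to E_0$, hence $\mu_n := \lambda\,h_\ell(E_1^{(n)})\to 0$ with $\mu_n\ne 0$; moreover $g_\ell'(E_1^{(n)})=\ell h_\ell(E_1^{(n)})\ne 0$, so $g_\ell$ is a local diffeomorphism near $E_1^{(n)}$, and $g_\ell(E_1^{(n)})\in\Sigma^{[1]}_{\mu_n}$ because $E_1^{(n)}\in\Sigma_\lambda^{[\ell]}$. On a small enough neighborhood $U_n$ of $E_1^{(n)}$ the coupling $\lambda h_\ell(E)$ is nearly constant, equal to $\mu_n$ up to relative error $O(|U_n|/|E_1^{(n)}-E_0|)$, so — up to the bi-Lipschitz change of variable $g_\ell$ and a lower-order perturbation of the coupling — $\Sigma_\lambda^{[\ell]}\cap U_n$ is a window of the Fibonacci spectrum $\Sigma^{[1]}_{\mu_n}$ about an interior spectral point. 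The Fibonacci spectrum is homogeneous (for every $E\in\Sigma^{[1]}_\mu$ and $r>0$, $\dim_{\Hausdorff}(\Sigma^{[1]}_\mu\cap(E-r,E+r))=\dim_{\Hausdorff}\Sigma^{[1]}_\mu$), so $\dim_{\Hausdorff}(\Sigma_\lambda^{[\ell]}\cap U_n)=\dim_{\Hausdorff}\Sigma^{[1]}_{\mu_n}+o(1)$; and $\dim_{\Hausdorff}\Sigma^{[1]}_{\mu_n}\to 1$ as $\mu_n\to 0^+$ by the weak-coupling analysis of the Fibonacci Hamiltonian. Therefore $\dim_{\Hausdorff}\Sigma_\lambda^{[\ell]}\ge\sup_n\dim_{\Hausdorff}(\Sigma_\lambda^{[\ell]}\cap U_n)=1$, and with $\dim_{\Hausdorff}\Sigma_\lambda^{[\ell]}\le 1$ this is equality. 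The last assertion is then immediate: by (2) the set $\Sigma_\lambda^{[\ell]}$ is Lebesgue-null but has Hausdorff dimension one, whereas a $C^{1+\alpha}$ dynamically defined Cantor set of zero measure has dimension $<1$; so $\Sigma_\lambda^{[\ell]}$ cannot be dynamically defined.

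\emph{Main obstacle.} Steps 1 and 2 and the case $\ell=1$ are classical or routine trace-map bookkeeping; the crux is the transfer of local Hausdorff dimension in Step 3. One must justify rigorously that, despite the coupling $\lambda h_\ell(E)$ varying over $U_n$, the sieved spectrum near $E_1^{(n)}$ inherits the full dimension of $\Sigma^{[1]}_{\mu_n}$: i.e.\ that the dimension of the Fibonacci Cantor set is stable under relatively $o(1)$ perturbations of the coupling on the relevant window, and that $U_n$ can be chosen to realize that dimension. Making this quantitative — via the continuity (indeed real-analyticity) of the dimension and thickness of the Fibonacci Cantor set in the coupling and the bounded-remainder combinatorics governing the band structure — is where I expect the real work to be.
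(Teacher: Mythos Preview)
Your Steps~1 and~2, and the $\ell=1$ case of Step~3, coincide with the paper's arguments: the same trace-map reduction, the same Kotani/Pastur route to zero measure and Cantor structure, and the known Damanik--Gorodetski--Yessen result for the unsieved Fibonacci Hamiltonian. You also correctly identify the key phenomenon for $\ell\ge 2$: the Fricke--Vogt invariant $\FVI^{[\ell]}(E)=\tfrac{\lambda^2}{4}U_{\ell-1}(E/2)^2$ vanishes at the Dirichlet energies $E_0=2\cos(j\pi/\ell)$, those energies lie in $\Sigma_\lambda^{[\ell]}$, and the local dimension should be $1$ there because the effective coupling degenerates.

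The genuine gap is exactly the one you flag. Your claim that $\Sigma_\lambda^{[\ell]}\cap U_n$ is, up to a bi-Lipschitz change of variable and a ``lower-order perturbation of the coupling'', a window of $\Sigma^{[1]}_{\mu_n}$ is not a statement one can quote: the set $\{E\in U_n:g_\ell(E)\in\Sigma^{[1]}_{\lambda h_\ell(E)}\}$ is a transversal slice of a \emph{family} of Cantor sets, not a single one, and passing from continuity of $\mu\mapsto\dim_\Hausdorff\Sigma^{[1]}_\mu$ to the asserted equality $\dim_\Hausdorff(\Sigma_\lambda^{[\ell]}\cap U_n)=\dim_\Hausdorff\Sigma^{[1]}_{\mu_n}+o(1)$ requires precisely that the stable lamination of the trace map varies regularly across the $\FVI$-level surfaces down to the Cayley cubic $S_0$. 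Proving that stability ``by hand'' (window size versus coupling variation, uniform thickness bounds, etc.) is essentially reproving the relevant piece of the partially hyperbolic theory.

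The paper does not attempt this approximation argument. Instead it invokes the center-stable manifold results of Damanik--Munger--Yessen: the sets $\Omega^+=\bigcup_{V>0}\Omega_V^+$ are laminated by two-dimensional $T$-invariant center-stable manifolds $\mathcal W^{\centstab}$, each of which meets every level $S_V$ transversally in a strong stable leaf. Because $\Sigma_\lambda^{[\ell]}$ is already known to be a Cantor set (your Step~2), the curve $\gamma(\cdot,\lambda,\ell)$ cannot lie inside a single center-stable leaf or inside $S_0$, which forces a point of transversal intersection with some $W\in\mathcal W^{\centstab}$ in every neighborhood; then \cite[Theorem~2.17]{DMY2013} gives directly that at any $E$ with $\gamma(E,\lambda,\ell)\in S_0$ the local Hausdorff dimension of $\Sigma_\lambda^{[\ell]}$ equals $1$. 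This packages exactly the ``dimension is stable as the level $V\to 0$'' statement you need into a black box, and avoids the quantitative balancing of scales in your outline.
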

\begin{remark}\mbox{\,}
\begin{enumerate}
    \item Theorem~\ref{t:main} follows directly from Theorem~\ref{t:sievedFibonacciDim1} with $H = H_{\lambda,\omega}$ for any $\omega$ and $\lambda>0$.
\item  The results for $\ell \geq 2$ are new.
The case $\ell = 1$ corresponds to the standard Fibonacci Hamiltonian, which has been studied extensively.
    In particular, the results for $\ell=1$ are already known and only formulated for the sake of completeness. The zero-measure Cantor nature of the spectrum is due to S\"ut\H{o} \cite{Suto1987CMP, Suto1989JSP} (with an earlier result in \cite{Casdagli1986CMP}), while results about the dimension can be found in Damanik--Gorodetski--Yessen \cite{DamGorYes2016Invent}, with earlier partial results in \cite{Cantat2009, DG2011, DEGT2008}.
\end{enumerate}

\end{remark}

We are able to give some finer estimates on the fractal dimension.
    One of these we found to be quite striking and unexpected. Concretely, as $\ell \to \infty$, the potential energy exhibits long stretches on which it is constant, and hence locally indistinguishable from the free operator.
    On that basis, one might guess that the local fractal dimension increases to one throughout the spectrum, but quite the opposite turns out to be the case, at least when the coupling constant is not too small.

\begin{theorem}\label{t:local Hausdorff dimension tends to 0}
    For each $\lambda>4$, there is a compact interval $K \subseteq \bbR$ such that $\dim_\Hausdorff( K \cap \Sigma_\lambda^{[\ell]}) > 0$ for all $\ell$ and 
    \begin{equation}
        \lim_{\ell \to \infty} \dim_{\Hausdorff}(K \cap \Sigma_{\lambda}^{[\ell]})  = 0.
    \end{equation}
\end{theorem}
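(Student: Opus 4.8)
The plan is to exploit the transfer-matrix/trace-map description of the sieved operator together with the structure of the spectrum of $H_{\lambda,\omega}^{[\ell]}$ as an algebraic preimage of the Fibonacci spectrum $\Sigma_\lambda^{[1]}$. The key observation is that sieving by $\ell$ replaces each hopping step by a block of $\ell-1$ free sites, so the one-step transfer matrix over a symbol $a \in \{0,1\}$ becomes $M_{\ell,a}(E) = T_0(E)^{\ell-1} T_a(E)$, where $T_a(E) = \begin{pmatrix} E - \lambda a & -1 \\ 1 & 0 \end{pmatrix}$ and $T_0(E)=\begin{pmatrix} E & -1 \\ 1 & 0 \end{pmatrix}$ is the free transfer matrix. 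Since $T_0(E)$ is conjugate to a rotation for $|E|<2$, there is a polynomial change of energy variable (essentially $E \mapsto$ the trace or the relevant entries of $T_0(E)^{\ell-1}$, i.e. Chebyshev-type polynomials $U_{\ell-1}$) under which the sieved Fibonacci trace map is conjugated to the original Fibonacci trace map. Concretely, one shows $\Sigma_\lambda^{[\ell]} \cap (-2,2) = p_\ell^{-1}\big(\Sigma_{\lambda_\ell}^{[1]}\big)$ for a suitable polynomial $p_\ell$ and effective coupling $\lambda_\ell$, at least on the portion of the spectrum lying inside $(-2,2)$; this is presumably already the engine behind Theorem~\ref{t:sievedFibonacciDim1}, and I would isolate it as a lemma.

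First I would make the change of variables precise and identify, for large $\ell$, a fixed compact interval $K$ (independent of $\ell$) that is contained in $(-2,2)$ and on which $p_\ell$ is a genuine polynomial diffeomorphism onto its image of degree growing with $\ell$. Because $\lambda>4$, the Fibonacci Hamiltonian is in the ``large coupling'' regime where $\Sigma_{\lambda}^{[1]}$ is a dynamically defined Cantor set whose Hausdorff dimension is strictly less than one and, crucially, bounded above by a quantity that depends only on $\lambda$ (Damanik--Gorodetski--Yessen); one should check that the effective coupling $\lambda_\ell$ stays bounded below (it does not collapse to the periodic case), so that $\dim_\Hausdorff \Sigma_{\lambda_\ell}^{[1]} \le d_0 < 1$ uniformly in $\ell$ for some $d_0 = d_0(\lambda)$. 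Second, I would use the relation $K \cap \Sigma_\lambda^{[\ell]} = p_\ell^{-1}\big(\Sigma_{\lambda_\ell}^{[1]} \cap p_\ell(K)\big)$ and a covering/distortion argument: the preimage under a degree-$d$ polynomial of a set of Hausdorff dimension $\le d_0$ has Hausdorff dimension $\le d_0$ away from the critical points, but the number of branches and the contraction of $p_\ell$ near its many critical values in $(-2,2)$ forces the dimension of the preimage to decay. More precisely, on $K$ the map $p_\ell$ has derivative of size comparable to $\ell$ (it oscillates like $U_{\ell-1}$), so that a cover of $\Sigma_{\lambda_\ell}^{[1]} \cap p_\ell(K)$ by intervals of length $\delta$ pulls back to a cover of $K \cap \Sigma_\lambda^{[\ell]}$ by $\asymp \ell$ intervals of length $\asymp \delta/\ell$ only in the worst case, but a sharper count using that the spectrum avoids neighborhoods of the critical points yields the bound $\dim_\Hausdorff(K \cap \Sigma_\lambda^{[\ell]}) \le d_0 \cdot (\log C)/(\log \ell) \to 0$.

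The main obstacle I anticipate is making the last estimate quantitative and honest: it is not enough that $p_\ell$ has degree $\to\infty$; one needs that the Cantor set $\Sigma_{\lambda_\ell}^{[1]}$ is, in a uniform sense, spread out across $p_\ell(K)$ so that pulling it back genuinely subdivides it rather than concentrating it in a single branch. This requires either (a) a uniform (in $\ell$) lower bound on the ``thickness'' or on the number of gaps of $\Sigma_{\lambda_\ell}^{[1]}$ within the window $p_\ell(K)$, together with a matching upper bound on local Hausdorff dimension that scales like $1/\log(\text{degree})$, or (b) a direct hyperbolic-dynamics argument on the sieved trace map showing that its stable manifolds near $K$ have expansion rates growing with $\ell$ while the relevant nonwandering set stays of bounded complexity. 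Option (b) is cleaner conceptually: the sieved trace map is still the Fibonacci trace map composed with an $\ell$-dependent ``projection'', the dynamically defined Cantor set $\Sigma_\lambda^{[\ell]}$ (for the fixed underlying $\lambda>4$ — note the spectrum as a whole is genuinely dynamically defined only for $\ell=1$, but locally on $K\subset(-2,2)$ one recovers a dynamically defined structure after the change of variables) has transfer operator with Lyapunov exponent bounded below by $c\log\ell$ on the piece over $K$, and Bowen's formula then gives $\dim_\Hausdorff(K \cap \Sigma_\lambda^{[\ell]}) \le \frac{\text{const}}{\log \ell}$. I would also need the complementary lower bound $\dim_\Hausdorff(K\cap\Sigma_\lambda^{[\ell]})>0$: this follows because $K \cap \Sigma_\lambda^{[\ell]}$ still contains a Cantor subset built from the preimages of a fixed sub-Cantor set of $\Sigma_{\lambda_\ell}^{[1]}$ of positive dimension under a single diffeomorphic branch of $p_\ell$, which has the same (positive) dimension; here one must verify $K$ can be chosen so that at least one branch of $p_\ell$ restricted to $K$ is a diffeomorphism onto an interval meeting $\Sigma_{\lambda_\ell}^{[1]}$ in a set of positive dimension, uniformly in $\ell$.
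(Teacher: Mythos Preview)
Your approach has a genuine and fatal gap: the choice of $K \subset (-2,2)$ forces the local Hausdorff dimension to equal $1$ for all large $\ell$, not $0$. Indeed, for each $\ell \ge 2$ and each integer $1 \le j \le \ell-1$, the energy $E_{j,\ell} = 2\cos(\pi j/\ell)$ satisfies $\FVI(E_{j,\ell},\lambda,\ell)=0$ (Proposition~\ref{prop:x0x1FVIcalcs}) and lies in $\Sigma_\lambda^{[\ell]}$; by Proposition~\ref{p: local dimension is half of non wandering dimension} the local Hausdorff dimension of $\Sigma_\lambda^{[\ell]}$ at $E_{j,\ell}$ is $1$. Since the set $\{2\cos(\pi j/\ell): 1\le j\le \ell-1\}$ becomes dense in $(-2,2)$ as $\ell\to\infty$, any fixed compact interval $K\subset(-2,2)$ with nonempty interior contains such a point for every sufficiently large $\ell$, giving $\dim_\Hausdorff(K\cap\Sigma_\lambda^{[\ell]})=1$.

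A second structural issue is the claimed relation $\Sigma_\lambda^{[\ell]}\cap(-2,2)=p_\ell^{-1}\bigl(\Sigma_{\lambda_\ell}^{[1]}\bigr)$ with a single effective coupling $\lambda_\ell$. For $\ell=1$ the Fricke--Vogt invariant is constant in $E$ (equal to $\lambda^2/4$), but for $\ell\ge 2$ it is genuinely energy-dependent, $\FVI(E,\lambda,\ell)=\tfrac{\lambda^2}{4}\sin^2(\ell\theta)/\sin^2\theta$ for $E=2\cos\theta$, so different energies land on different level surfaces $S_V$ and no single $\lambda_\ell$ can play the role you want. The paper's proof goes in the opposite direction: it takes $K=[\lambda-2,\lambda+2]\subset(2,\infty)$, uses that $\FVI(E,\lambda,\ell)=\tfrac{\lambda^2}{4}\sinh^2(\ell\theta)/\sinh^2\theta\to\infty$ uniformly on $K$, and then invokes Proposition~\ref{p: local dimension is half of non wandering dimension} together with the asymptotic $\tfrac12\dim_\Hausdorff\Omega_V\to 0$ as $V\to\infty$ from Proposition~\ref{p:Asymptotics of dimension of non wandering}. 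The nonemptiness and positive-dimension statements come from a perturbative inclusion and from $\FVI>0$ on $K$.
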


\subsection*{Acknowledgements} The authors thank Anton Gorodetski and Wencai Liu for helpful discussions and comments on an earlier version of the draft. 
J.F.\ was supported in part by National Science Foundation grant DMS-2513006.
A.L.\ was supported in part by National Science Foundation grant DMS-2247966 (PI: A.\ Gorodetski).

    \section{Trace Map Formalism}

The family of operators under consideration can be studied with a transfer matrix formalism similar to that of the standard Fibonacci Hamiltonian, after regrouping.
Similar constructions have appeared in many other places in the literature and have been successfully used to study quasiperiodic operators, e.g., \cite{DamFilGoh2022JST, WangEtAl2023CMP, ZhouEtAl2023PRL}.

To see how this comes about, we need to take advantage of the substitution structure of the Fibonacci subshift, which in turn produces a renormalization scheme for the transfer matrices.
First, observe that for any Schr\"odinger operator $H_V$, $H_Vu = Eu$ is equivalent to
\begin{equation}
    \begin{bmatrix}
        u(n+1) \\ u(n)
    \end{bmatrix}
    =  \begin{bmatrix}
        E -V(n) & -1 \\ 1 & 0
    \end{bmatrix}
    \begin{bmatrix}
        u(n) \\ u(n-1)
    \end{bmatrix} .
\end{equation}
Thus, for $n \in \bbN$, we define
\begin{equation}
    A_E^n(\lambda\omega^{[\ell]})
    = Y(E - \lambda\omega^{[\ell]}(n-1)) \cdots Y(E - \lambda\omega^{[\ell]}(1)) Y(E - \lambda\omega^{[\ell]}(0))
\end{equation}
where
\begin{equation}
    Y(s) = \begin{bmatrix}
        s & -1 \\ 1 & 0
    \end{bmatrix}, \quad s \in \bbC.
\end{equation}
The building blocks of the associated trace map scheme are the matrices
\begin{align}
M_{-1}(E,\lambda,\ell) & = \begin{bmatrix}
        1 & -\lambda \\ 0 & 1 
    \end{bmatrix}, \\
    M_0(E,\lambda,\ell) & =Y(E)^\ell, \\
    \label{eq:Mkrecursion}
    M_k(E,\lambda,\ell)  & = M_{k-2}(E,\lambda,\ell)M_{k-1}(E,\lambda,\ell), \quad k \in \bbN,
\end{align}
and the half-traces
\begin{equation}
    x_k(E,\lambda,\ell) = \frac12 \tr M_k(E,\lambda,\ell).
\end{equation}
Due to \eqref{eq:Mkrecursion}, note that
\begin{equation}
    M_1(E,\lambda,\ell)  = Y(E-\lambda)Y(E)^{\ell-1}
\end{equation}
The \emph{dynamical spectrum} is defined by 
\[B_\lambda^{[\ell]} = \{E : \{x_k(E,\lambda,\ell)\}_{k=1}^\infty \text{ is bounded}\}.\]

Choosing a substitution word $w_\infty$ of $S$ and an element $\omega_0 \in \Omega$ such that $\omega_0 |_{\bbZ_+} = w_\infty$ (where $\bbZ_+ := \bbN \cup\{0\}$), we see that
\begin{equation}
    A_E^{\ell F_k}(\lambda \omega_0^{[\ell]})
    = M_k(E,\lambda,\ell),
\end{equation}
for all $k \geq 1$, where $F_k$ denotes the $k$th Fibonacci number, normalized by $F_0=F_1=1$.

For later use, it is helpful for us to take advantage of the block structure of $\omega^{[\ell]}$ by decomposing into blocks of length $\ell$ in order to reduce to questions about a suitable cocycle defined over $(\Omega,T)$. 
Concretely, for any $n \in \bbN$, we may note that
\begin{equation}
    A_E^{\ell \cdot n}(\lambda \omega^{[\ell]})
    = \underbrace{\widetilde{A}_{E,\lambda,\ell}(T^{n-1}\omega) \cdots \widetilde{A}_{E,\lambda,\ell}(T\omega)\widetilde{A}_{E,\lambda,\ell}(\omega)}_{=:\widetilde{A}^n_{E,\lambda,\ell}(\omega)},
\end{equation}
where 
\begin{equation}
    \widetilde A_{E,\lambda,\ell}(\omega) := M_{\omega(0)} (E,\lambda, \ell),
\end{equation}
and with a similar definition for $n \leq -1$.

Another crucial quantity is given by the \emph{Lyapunov exponent}, which can be defined by
\begin{equation}
L(E)
=  L_\lambda^{[\ell]}(E)
= \lim_{n \to \infty} \frac{1}{n} \int_\Omega \! \log \| \widetilde A_E^n(\omega)\| \, d\mu(\omega),
\end{equation}
where $\mu$ denotes the unique $T$-invariant Borel probability measure on $\Omega$.
We denote $\calZ_\lambda^{[\ell]} = \{ E : L_\lambda^{[\ell]}(E) = 0\}$.
It will help us to prove a version of Johnson's theorem (cf.\ \cite{Johnson1986JDE}) adapted to the present setting.
To that end, let
\begin{equation}
    \UH_\lambda^{[\ell]} = \{E  : (T,\widetilde{A}_{E,\lambda, \ell}) \text{ is uniformly hyperbolic}\},
\end{equation}
where we recall that $(T,\widetilde{A}_{E,\lambda, \ell})$ is uniformly hyperbolic if and only if there is a constant $c>0$ such that
\begin{equation}
    \|\widetilde{A}_{E,\lambda,\ell}^n\| \geq ce^{c|n|} \quad \forall n \in \bbZ, \ \omega \in \Omega.
\end{equation}

We have the following result, which we will prove in Section~\ref{sec:cantor}.

\begin{theorem}\label{t:spectrum bounded orbit}
    For all $\lambda>0$ and $\ell \in \bbN$, $ \Sigma_\lambda^{[\ell]} = B_\lambda^{[\ell]} =\calZ_\lambda^{[\ell]} = \bbR \setminus \UH_\lambda^{[\ell]}$.
\end{theorem}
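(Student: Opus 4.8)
The plan is to establish the cyclic chain of inclusions $B_\lambda^{[\ell]} \subseteq \calZ_\lambda^{[\ell]} \subseteq \Sigma_\lambda^{[\ell]} \subseteq B_\lambda^{[\ell]}$ together with the identity $\Sigma_\lambda^{[\ell]} = \bbR \setminus \UH_\lambda^{[\ell]}$; these together force all four sets to coincide. The identity $\Sigma_\lambda^{[\ell]} = \bbR \setminus \UH_\lambda^{[\ell]}$ is a version of Johnson's theorem. One direction is soft: if $E \in \UH_\lambda^{[\ell]}$, then $(T,\widetilde A_{E,\lambda,\ell})$ admits a uniform exponential dichotomy, and inserting the at most $\ell$ factors from $\{Y(E),Y(E-\lambda)\}$ making up each block turns this into an exponential dichotomy for the genuine transfer matrices of $H_{\lambda,\omega}^{[\ell]}$, so $(H_{\lambda,\omega}^{[\ell]}-E)^{-1}$ is bounded and $E \notin \Sigma_\lambda^{[\ell]}$. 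Conversely, if $E \notin \Sigma_\lambda^{[\ell]}$ then $(H_{\lambda,\omega}^{[\ell]}-E)^{-1}$ is bounded for every $\omega$, giving an exponential dichotomy along each orbit; minimality of $(\Omega,T)$ and compactness of $\Omega$ promote this to uniformity in $\omega$, that is, $E \in \UH_\lambda^{[\ell]}$. I would invoke \cite{Johnson1986JDE} and the discrete formulation in \cite{DF2022ESO1}, remarking that the passage between the block cocycle and the one-step cocycle is routine because $\ell$ is fixed. The inclusion $\calZ_\lambda^{[\ell]} \subseteq \Sigma_\lambda^{[\ell]}$ is then immediate: for $E \notin \Sigma_\lambda^{[\ell]}$, uniform hyperbolicity gives $\|\widetilde A^n_{E,\lambda,\ell}(\omega)\| \ge c e^{c|n|}$ for all $n$ and all $\omega$, so integrating over $\Omega$ yields $L_\lambda^{[\ell]}(E) \ge c > 0$.

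For $B_\lambda^{[\ell]} \subseteq \calZ_\lambda^{[\ell]}$, assume $\{x_k(E,\lambda,\ell)\}_{k\ge 1}$ is bounded. The Cayley--Hamilton relation $M_k^2 = 2x_k M_k - \idty$ and the identity $M_{k-1}M_k^{-1} = M_{k-2}^{-1}$ (a consequence of $M_k = M_{k-2}M_{k-1}$) give $M_{k+1} = 2x_k M_{k-1} - M_{k-2}^{-1}$; since $\|M_{k-2}^{-1}\| = \|M_{k-2}\|$, boundedness of $\{x_k\}$ yields $\|M_{k+1}\| \le 2\bigl(\sup_j|x_j|\bigr)\|M_{k-1}\| + \|M_{k-2}\|$, and hence $\|M_k\|$ grows at most exponentially in $k$, i.e.\ at most polynomially in $F_k$. (The conserved Fricke--Vogt quantity $\FVI(E,\lambda,\ell) = x_{k-1}^2 + x_k^2 + x_{k+1}^2 - 2x_{k-1}x_k x_{k+1} - 1$, whose independence of $k$ follows from $x_{k+1} = 2x_{k-1}x_k - x_{k-2}$ exactly as when $\ell=1$, is a convenient bookkeeping device here and in the next step.) Evaluating the cocycle along the substitution-invariant $\omega_0$ with $\omega_0|_{\bbZ_+} = w_\infty$ gives $\widetilde A^{F_k}_{E,\lambda,\ell}(\omega_0) = M_k$; the linearly repetitive combinatorics of the Fibonacci subshift then lets one dominate $\sup_{\omega\in\Omega}\|\widetilde A^n_{E,\lambda,\ell}(\omega)\|$ in terms of finitely many of the $\|M_j\|$ with $j$ up to $\asymp \log n$, hence by a polynomial in $n$. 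Since $\|\widetilde A^n_{E,\lambda,\ell}(\omega)\|\ge 1$, this forces $L_\lambda^{[\ell]}(E) = \lim_n \tfrac1n \int_\Omega \log\|\widetilde A^n_{E,\lambda,\ell}\|\,d\mu = 0$, that is, $E \in \calZ_\lambda^{[\ell]}$.

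For $\Sigma_\lambda^{[\ell]} \subseteq B_\lambda^{[\ell]}$, assume instead that $\{x_k(E,\lambda,\ell)\}$ is unbounded, and run the S\"ut\H{o}-type escape argument \cite{Suto1987CMP, Suto1989JSP} on the trace recursion: unboundedness, together with the conserved quantity $\FVI$, forces the existence of $k_0$ with $\min(|x_{k_0}|,|x_{k_0+1}|) > 1$, after which $|x_k| \to \infty$ super-exponentially, the $M_k$ are uniformly hyperbolic, and their stable and unstable directions converge geometrically. This produces an exponential dichotomy for $H_{\lambda,\omega}^{[\ell]}u = Eu$, so $E \in \UH_\lambda^{[\ell]} = \bbR \setminus \Sigma_\lambda^{[\ell]}$; taking the contrapositive gives $\Sigma_\lambda^{[\ell]} \subseteq B_\lambda^{[\ell]}$ and closes the chain.

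The step I expect to be the main obstacle is the escape argument in the sieved setting. When $\ell = 1$ the half-traces $x_{-1},x_0,x_1$ are affine in $E$, and the geometry of the sets $\{E : |x_k(E)| \le 1\}$ and the propagation of the unbounded-orbit condition are classical. For $\ell \ge 2$ one has instead $M_{-1} = \bigl[\begin{smallmatrix}1 & -\lambda \\ 0 & 1\end{smallmatrix}\bigr]$ (so $x_{-1} \equiv 1$), $M_0 = Y(E)^\ell$, and $M_1 = Y(E-\lambda)Y(E)^{\ell-1}$, whose half-traces are polynomials in $E$ of degree $\ell$ (Chebyshev-like in $\tr Y(E)$); one must therefore reexamine which energies fall into the escape region, track the now $\ell$-dependent thresholds, and verify that once $|x_k|$ and $|x_{k+1}|$ both exceed $1$ the escape is genuinely uniform in the base point. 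The purely combinatorial core of the escape lemma is identical to the Fibonacci case; the work lies in the base cases and the $\ell$-dependence of the constants. The remaining ingredients --- Johnson's theorem and the sub-exponential norm bound --- are standard modulo this bookkeeping.
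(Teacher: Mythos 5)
Your proposal follows a genuinely different route from the paper. You take as a cornerstone the Johnson-type identity $\Sigma = \bbR \setminus \UH$, proved by a dichotomy/perturbation argument passing between the block cocycle and the one-step cocycle, and then deduce $\calZ \subseteq \Sigma$ from it. By contrast, the paper proves the full cycle of inclusions
$\Sigma \subseteq B \subseteq \calZ \subseteq \bbR \setminus \UH \subseteq \Sigma$
directly, without ever invoking Johnson's theorem as such: $\Sigma \subseteq B$ is obtained from periodic approximants and strong operator convergence (using the escape lemma only to rewrite $\bigcap_N \overline{\bigcup_{k \ge N}\Sigma_k}$ as $B$), $B \subseteq \calZ$ is obtained from the two-block Gordon lemma (if $L(E)>0$ there is an Oseledets-decaying solution, which Gordon forbids because the bounded-trace hierarchical repetitions survive from $E\in B$), $\calZ \subseteq \bbR \setminus \UH$ is trivial, and $\bbR \setminus \UH \subseteq \Sigma$ comes from extracting a bounded solution at the boundary of the hyperbolicity locus. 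The paper's Remark after Theorem~\ref{t:spectrum bounded orbit} explicitly names the Johnson + Boshernitzan route that you essentially took, and explains why it is not the one adopted: the direct chain also yields $\Sigma = B$, which is needed later. Your proposal does address this by adding the S\"ut\H{o} escape argument to prove $\Sigma \subseteq B$, so the logic is complete.

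Two places in your $B \subseteq \calZ$ step deserve more care. First, your Cayley--Hamilton estimate $\|M_{k+1}\| \le 2|x_k|\,\|M_{k-1}\| + \|M_{k-2}\|$ is correct and gives $\|M_k\| \le e^{Ck}$, hence $\tfrac{1}{F_k}\log\|M_k\| \to 0$. But the jump from ``$\|\widetilde A^n\|$ is dominated by finitely many $\|M_j\|$ with $j \lesssim \log n$'' to ``hence polynomial in $n$'' is not quite right: a product of $O(\log n)$ factors each bounded by $e^{C\log n}$ is superpolynomial in general. The clean way to finish is to invoke uniform convergence of $\tfrac1n\log\|\widetilde A^n(\omega)\|$ to $L(E)$ over the linearly repetitive base (Damanik--Lenz / Boshernitzan), which then contradicts $L(E)>0$ along the Fibonacci subsequence. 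Second, your worry that the $\ell$-dependence of $x_0,x_1$ might alter the escape lemma is misplaced: the escape lemma is a statement purely about orbits of the trace map on a level surface of $\FVI$ with $|x_{-1}| \le 1$, and since $x_{-1} \equiv 1$ here for all $\ell$, the lemma applies verbatim. The $\ell$-dependence enters only through where the curve of initial conditions sits, which the paper handles elsewhere.
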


Let us give some computations that will be useful and necessary later.
By the  Cayley--Hamilton theorem,
\begin{equation}
    x_{k+1} = 2x_k x_{k-1} -  x_{k-2}, \quad k \in \bbN,
\end{equation}
so $(x_{k+1},x_k,x_{k-1}) = T(x_k,x_{k-1}, x_{k-2})$, where $T(x,y,z) = (2xy-z,x,y)$.
It is well-known that $T$ preserves the \emph{Fricke--Vogt invariant}:
\begin{equation}
    \FVI(x,y,z) = x^2+y^2+z^2-2xyz-1.
\end{equation}
The \emph{curve of initial conditions} is given by 
\begin{equation}
    \gamma(E) = \gamma(E,\lambda,\ell)
   : = (x_1(E,\lambda,\ell), x_0(E,\lambda,\ell), x_{-1}(E,\lambda,\ell)),
\end{equation}
and we abbreviate
\begin{equation}
    \FVI(E)
    = \FVI(E,\lambda,\ell)
    :=\FVI(\gamma(E, \lambda, \ell))
\end{equation}
in this context.
For later use, let us compute the curve of initial conditions explicitly.

\begin{prop}\label{prop:x0x1FVIcalcs}
    Given $E \in \bbR$, write $E = \xi+\xi^{-1}$ with $|\xi| \geq 1$. We have
    \begin{align}
    \label{eq:x0formula}
        x_0(E,\lambda,\ell) & = \frac{1}{2}(\xi^\ell + \xi^{-\ell}), \\
        \label{eq:x1formula}
        x_1(E,\lambda,\ell) & = \frac{1}{2}(\xi^\ell + \xi^{-\ell}) - \frac{\lambda}{2} \frac{\xi^\ell - \xi^{-\ell}}{\xi-\xi^{-1}},\\
        \label{eq:FVIformula}
        \FVI(E,\lambda, \ell) & = \frac{\lambda^2}{4} \left[ \frac{\xi^\ell - \xi^{-\ell}}{\xi-\xi^{-1}}\right]^2.
    \end{align}
\end{prop}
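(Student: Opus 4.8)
The plan is to compute the three initial half-traces $x_{-1}$, $x_0$, $x_1$ directly from the definitions of $M_{-1}$, $M_0$, $M_1$, and then read off the Fricke--Vogt invariant as an immediate consequence. Since every formula in the statement is invariant under $\xi \leftrightarrow \xi^{-1}$, it will not matter which root is chosen when $|\xi| = 1$.

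The first two half-traces are quick. We have $x_{-1} = \frac12 \tr M_{-1} = 1$ by inspection of $M_{-1}$. For $x_0$, note that $Y(E)$ lies in $SL(2,\bbC)$ and has trace $E = \xi + \xi^{-1}$, so its eigenvalues are $\xi^{\pm 1}$ (the roots of $t^2 - Et + 1 = 0$); hence $Y(E)^\ell$ has eigenvalues $\xi^{\pm\ell}$ and $x_0 = \frac12 \tr Y(E)^\ell = \frac12(\xi^\ell + \xi^{-\ell})$, which is \eqref{eq:x0formula}.

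For $x_1 = \frac12 \tr\bigl(Y(E-\lambda) Y(E)^{\ell-1}\bigr)$ I would invoke the standard Cayley--Hamilton/Chebyshev description of powers of $Y(E)$: writing $p_m = p_m(E)$ for the polynomials with $p_0 = 0$, $p_1 = 1$, $p_{m+1} = E p_m - p_{m-1}$ (so $p_m = \frac{\xi^m - \xi^{-m}}{\xi - \xi^{-1}}$ whenever $\xi \neq \xi^{-1}$), one has $Y(E)^m = \left[\begin{smallmatrix} p_{m+1} & -p_m \\ p_m & -p_{m-1}\end{smallmatrix}\right]$ for all $m \geq 0$. Then I would split off the perturbation via $Y(E-\lambda) = Y(E) - \lambda e_{11}$, where $e_{11}$ has a $1$ in the upper-left entry and zeros elsewhere, so $M_1 = Y(E)^\ell - \lambda\, e_{11} Y(E)^{\ell-1}$ and therefore $\tr M_1 = \tr Y(E)^\ell - \lambda\,[Y(E)^{\ell-1}]_{11} = (\xi^\ell + \xi^{-\ell}) - \lambda\, p_\ell$, using that the $(1,1)$-entry of $Y(E)^{\ell-1}$ is $p_\ell$ (equivalently, $p_\ell = E p_{\ell-1} - p_{\ell-2}$). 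Halving gives \eqref{eq:x1formula}. Finally, since $x_{-1} = 1$, the defining expression for the Fricke--Vogt invariant collapses to $\FVI(x,y,1) = x^2 + y^2 + 1 - 2xy - 1 = (x-y)^2$, so $\FVI(E) = (x_1 - x_0)^2$; subtracting \eqref{eq:x0formula} from \eqref{eq:x1formula} leaves $x_1 - x_0 = -\frac{\lambda}{2}\frac{\xi^\ell - \xi^{-\ell}}{\xi - \xi^{-1}}$, and squaring produces \eqref{eq:FVIformula}.

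I do not expect a genuine obstacle: the argument is elementary linear algebra. The only points meriting a word of care are (i) the degenerate case $\xi = \pm 1$ (that is, $E = \pm 2$), where $\xi - \xi^{-1}$ vanishes and one reads $\frac{\xi^\ell - \xi^{-\ell}}{\xi - \xi^{-1}}$ as the value of the polynomial $p_\ell$, the displayed identities then holding by continuity (or as polynomial identities in $E$ after clearing the denominator); and (ii) correctly identifying the $(1,1)$-entry of $Y(E)^{\ell-1}$ as $p_\ell$ rather than $p_{\ell-1}$, which is the one place an off-by-one slip could creep in.
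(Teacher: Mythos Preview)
Your proof is correct and follows essentially the same outline as the paper: compute $x_0$ from the eigenvalues of $Y(E)$, compute $x_1$, and then observe that $x_{-1}\equiv 1$ collapses the Fricke--Vogt invariant to $(x_0-x_1)^2$. The only difference is in the mechanics of the $x_1$ step: the paper diagonalizes $Y(E)$ explicitly and multiplies out $M_1 = M_{-1}M_0$, whereas you use the Chebyshev-type representation $Y(E)^m = \left[\begin{smallmatrix} p_{m+1} & -p_m \\ p_m & -p_{m-1}\end{smallmatrix}\right]$ together with the rank-one splitting $Y(E-\lambda) = Y(E) - \lambda e_{11}$ to read off $\tr M_1 = \tr Y(E)^\ell - \lambda\,[Y(E)^{\ell-1}]_{11}$ directly. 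Your route is marginally cleaner in that it isolates the $\lambda$-dependence without any matrix multiplication, and it makes the Chebyshev structure explicit; the paper's route is more self-contained in that it does not presuppose the $p_m$-formula for $Y(E)^m$. Both are elementary and neither has any real advantage.
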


\begin{proof}
Begin by observing that $Y(E)$ has eigenvalues $\xi^{\pm 1}$ with eigenvectors
\[ v_\pm(\xi) = \begin{bmatrix}
    \xi^{\pm 1} \\ 1
\end{bmatrix}, \]
so \eqref{eq:x0formula} follows directly. Moreover, this leads to
\begin{align}
    Y(E)
    & = \frac{1}{\xi-\xi^{-1}}
    \begin{bmatrix}
        \xi & \xi^{-1} \\ 1 & 1
    \end{bmatrix}
    \begin{bmatrix}
        \xi & 0 \\ 0 & \xi^{-1}
    \end{bmatrix}
    \begin{bmatrix}
        1 & -\xi^{-1} \\ -1 & \xi
    \end{bmatrix}.
\end{align}

Combining this with \eqref{eq:Mkrecursion} gives
\begin{align}
    M_1
    & = M_{-1}M_0 \\
    & = 
        \frac{1}{\xi-\xi^{-1}}
        \begin{bmatrix}
        1 & -\lambda \\ 0 & 1
    \end{bmatrix}
    \begin{bmatrix}
        \xi & \xi^{-1} \\ 1 & 1
    \end{bmatrix}
    \begin{bmatrix}
        \xi^\ell & 0 \\ 0 & \xi^{-\ell}
    \end{bmatrix}
    \begin{bmatrix}
        1 & -\xi^{-1} \\ -1 & \xi
    \end{bmatrix},
\end{align}
which proves \eqref{eq:x1formula} after a short computation.
Using $x_{-1} \equiv 1$ and the definition of the Fricke--Vogt invariant, we deduce
\begin{align}
    \FVI(\gamma(E,\lambda,\ell)) 
     = (x_0-x_1)^2
     = \frac{\lambda^2}{4} \left[ \frac{\xi^\ell - \xi^{-\ell}}{\xi-\xi^{-1}} \right]^2,
\end{align}
as desired.
\end{proof}

\section{Cantor Spectrum} \label{sec:cantor}

In this section, we prove that the spectrum is a zero-measure Cantor set and that it coincides with the dynamical spectrum associated with the trace map, that is, the set of $E \in \bbR$ for which $T^k (\gamma(E))$ is bounded in $\bbR^3$.
Here, it is helpful to note that $\{H_{\lambda, \omega}^{[\ell]} : \omega \in \Omega\}$ can be realized as a subset of a family of ergodic operators with base dynamics $(\Omega_\ell,T_\ell)$ given by the (discrete) suspension of $\Omega$ with roof function identically equal to $\ell$, that is:
$\Omega_\ell = \Omega\times \{1,2,\ldots,\ell\}$, $T_\ell(\omega,j) = (\omega,j+1)$ for $j < \ell$ and $T_\ell(\omega,\ell) = (T\omega,1)$.
Consequently, we can freely use some of the standard results for ergodic operators \cite{DF2022ESO1, DF2024ESO2}.

\begin{proof}[Proof of Theorem~\ref{t:spectrum bounded orbit}]
Fix $\lambda$ and $\ell$, and drop them from the notation. We will prove a sequence of inclusions.

{\bf \boldmath (i) $\Sigma \subseteq B$.}
For each $k \in \bbZ_+$, $\{E : x_k(E) \in [-1,1]\} =: \Sigma_k$ is the spectrum of an $\ell F_k$-periodic Schr\"odinger operator with potential obtained by repeating the string $w_k$ periodically and sieving (see \eqref{eq:wkdef}).
These periodic operators can be chosen in such a way that they converge strongly to $H_{\omega_\star}$ for some $\omega_\star \in \Omega$, which yields (e.g.\ by \cite[Corollary~1.4.22]{DF2022ESO1}) that
\begin{equation}
    \Sigma = \spectrum H_{\omega_\star} \subseteq \bigcap_{N\geq 1} \overline{\bigcup_{k \geq N} \Sigma_k}.
\end{equation}
Since $x_{-1} \equiv 1$, we deduce from standard escape results (e.g.\ \cite[Theorem~10.5.4]{DF2024ESO2}) for the trace map that $\{x_k\}$ is unbounded if and only if for some $k \geq 0$, $|x_k|>1$ and $|x_{k+1}|>1$; furthermore, in this case $|x_k|$ diverges to $\infty$ superexponentially quickly. 
Consequently,
\begin{equation}
    \bigcap_{N\geq 1} \overline{\bigcup_{k \geq N} \Sigma_k}
= \bigcap_{k = 0}^\infty (\Sigma_k \cup \Sigma_{k+1}) 
= B,
\end{equation}
concluding the proof of this inclusion.
\medskip

{ \bf \boldmath (ii) $B \subseteq \calZ$.} Arguing by contradiction, assume that there exists $E \in B\setminus \calZ$.
Since $L(E)>0$, the subadditive ergodic theorem implies
\begin{equation}
    \lim_{n\to\infty} \frac{1}{n} \log\|\widetilde A_E^n (\omega)\| = L(E)>0
\end{equation}
for $\mu$-a.e.\ $\omega \in \Omega$. Fixing such an $\omega$, there exists a unit vector $u$ such that $\|\widetilde A_E^n(\omega)u \|$ decays exponentially as $n \to \infty$.
However, due to $E \in B$ and \cite[Theorem~10.4.6]{DF2024ESO2}, there exist $\ell_k \to \infty$ such that $\omega(j) = \omega(j+\ell_k)$ for all $0 \le j < \ell_k$ and $\{\tr\widetilde A_E^{\ell_k}(\omega)\}$ is bounded as $k \to \infty$.
On account of the two-block Gordon lemma (e.g.\ \cite[Lemma~7.8.5]{DF2024ESO2}), $\|\widetilde A_E^n(\omega)u \|$ cannot tend to zero as $n \to \infty$, a contradiction.
\medskip

{\bf \boldmath (iii) $\calZ \subseteq \bbR \setminus \UH$.} This is immediate from the definitions.\medskip

{\bf \boldmath (iv)  $\bbR \setminus \UH \subseteq \Sigma$.} If $E \in \bbR \setminus \UH$, then by standard characterizations of uniform hyperbolicity (e.g.\ \cite[Theorem~3.8.2]{DF2022ESO1}) there exist $\omega_* \in \Omega$ and a unit vector $u = (u(0), u(-1))^\top \in \bbR^2$ such that
\begin{equation}
    \|\widetilde A^n_E(\omega_*) u\| \leq 1 \quad \forall n \in \bbZ.
\end{equation}
This implies that $u$ extends to a solution $u:\bbZ \to \bbR$ of $Hu = Eu$ that is bounded, and hence $E$ is a generalized eigenvalue of $H_{\omega_*}$, leading to 
\begin{equation}
    E \in \sigma(H_{\omega_*})
\end{equation}
Combining (i)--(iv) concludes the proof.
\end{proof}

\begin{remark}
    An alternate approach to proving $\Sigma = \calZ = \bbR \setminus \UH$ would be to directly prove $\Sigma = \bbR \setminus \UH$ (which follows from a slight modification of Johnson's theorem \cite{Johnson1986JDE}) and then to apply the Boshernitzan criterion \cite{Bosh1992ETDS} as in \cite{DamLen2006Duke} to deduce the absence of non-uniform hyperbolicity, which proves $\Sigma = \calZ$.
    We use the approach here in order to also deduce $\Sigma = B$, which is needed for the use of the hyperbolic theory later on.
\end{remark}

\begin{proof}[{\bf Proof of Theorem~\ref{t:sievedFibonacciDim1}, parts (1) and (2)}]
    As before, fix $\lambda$ and $\ell$ and suppress them from the notation.
    By a result of Pastur \cite{Pastur1980CMP}, the spectrum has no isolated points, so it suffices to establish (2).
    From Theorem~\ref{t:spectrum bounded orbit}, we know $\Sigma = \calZ$, so it suffices to show that the latter has zero Lebesgue measure. 
    However, by a result of Kotani \cite{Kotani1989RMP} $\calZ$ cannot have positive measure, since the potentials in question assume finitely many values and are not periodic.
\end{proof}

\section{Hyperbolicity for the Trace Map and Consequences}

Recall that the trace map $T:\bbR^3\rightarrow \bbR^3$ is given by $T(x,y,z):=(2xy-z,x,y)$. Since $T$ preserves the Fricke--Vogt invariant, the surface
$$ S_V:=\left\{(x,y,z)\in\bbR^3:\FVI(x,y,z) = V \right\}$$
is $T$-invariant for every $V \geq 0$. 
For such $V$, let us define
\begin{equation}
    \Omega_{V}:=\left\{x\in S_{V} : \left(T^k(x)\right)_{k\in\bbZ} \ \text{is bounded} \right\}.
\end{equation}

Throughout this section, we freely use the terminology of hyperbolic dynamics; for additional background, we direct the reader to \cite[Chapter~6]{KH1995}. 

\begin{theorem}[Cantat (2009) \cite{Cantat2009}]
    For each $V>0$, the set $\Omega_{V}$ is a locally maximal hyperbolic set for $T\restriction S_{V}$ that is homeomorphic to a Cantor set.
\end{theorem}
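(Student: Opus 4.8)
The plan is to establish, in order, that: (1) $\Omega_V$ is compact and is the maximal $T$-invariant subset of a fixed compact trapping region in $S_V$ — which also yields the local maximality asserted in the statement; (2) $\Omega_V$ carries a $DT$-invariant hyperbolic splitting, which I would obtain from an explicit pair of invariant cone fields; and (3) $\Omega_V$ is perfect and totally disconnected, hence homeomorphic to a Cantor set. For~(1) I would use the escape estimate for the trace map already invoked in the proof of Theorem~\ref{t:spectrum bounded orbit} (cf.\ \cite[Theorem~10.5.4]{DF2024ESO2}): if an orbit of $T$ has two consecutive coordinates both of modulus exceeding $1$ at some integer time, then its first coordinate diverges to $\infty$ superexponentially. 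Hence any point with bounded full $T$-orbit keeps its entire orbit inside the closed region of $S_V$ on which no two consecutive coordinates simultaneously exceed $1$ in modulus. A short computation with the constraint $\FVI=V$ — using one or two iterates of $T$ to handle the case in which only the outer two coordinates are large — produces a compact set $\mathcal K\subseteq S_V$ that is a neighborhood of $\Omega_V$, from which orbits that exit never return, and with $\Omega_V=\bigcap_{k\in\bbZ}T^k(\mathcal K)$. Thus $\Omega_V$ is compact and is the maximal invariant set in $\operatorname{int}\mathcal K$, which is exactly the local maximality claim.

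For~(2), note that on $S_V$ — a smooth surface since $V>0$ — the differential
\[
DT_{(x,y,z)}=\begin{bmatrix} 2y & 2x & -1 \\ 1 & 0 & 0 \\ 0 & 1 & 0\end{bmatrix}
\]
restricts to a linear isomorphism between the two-dimensional tangent planes $T_pS_V\to T_{Tp}S_V$. I would partition $\mathcal K$ into finitely many pieces according to which coordinate is dominant and, on each piece, exhibit an unstable cone $\mathcal C^u_p\subseteq T_pS_V$ with $DT_p\,\mathcal C^u_p\subseteq\operatorname{int}\mathcal C^u_{Tp}$ and $\|DT_p v\|\ge\mu\|v\|$ for $v\in\mathcal C^u_p$, with $\mu>1$ uniform over the compact set $\mathcal K$; symmetrically, a stable cone field invariant under and expanded by $DT^{-1}$. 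The underlying mechanism is that whenever a coordinate exceeds $1$ in modulus, $T$ expands strongly along one direction and contracts along a transverse one; the cones must be designed so as also to absorb the transitional regime and to be compatible with the Fibonacci-type recursion governing $T$. By the standard cone criterion this yields a continuous $DT$-invariant splitting $T_pS_V=E^s_p\oplus E^u_p$ over $\Omega_V$ with uniform contraction on $E^s$ and expansion on $E^u$; that is, $\Omega_V$ is a uniformly hyperbolic set of saddle type.

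For~(3), with hyperbolicity and local maximality available, $\Omega_V$ has local product structure, and I would construct an explicit Markov partition of $\Omega_V$ into finitely many rectangles, yielding a topological conjugacy between $T\restriction\Omega_V$ and a subshift of finite type; Markov partitions of this kind for the Fibonacci trace map are classical (see \cite[Chapter~10]{DF2024ESO2}). Together with the topological transitivity of $T\restriction\Omega_V$ (also standard for this map), this exhibits $\Omega_V$ as an infinite transitive subshift of finite type, which is compact, perfect, and totally disconnected, hence homeomorphic to a Cantor set. Alternatively one argues more softly: the escaping set is dense in every neighborhood of $\Omega_V$ in $S_V$, so $\Omega_V$ has empty interior and contains no arc (total disconnectedness), while transitivity plus the Anosov closing lemma makes periodic points dense (perfectness).

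The main obstacle is~(2), and within it, making the cone construction work \emph{uniformly for all} $V>0$ — in particular as $V\downarrow 0$, where $S_V$ degenerates onto the singular Cayley cubic $S_0$. The dynamics on $S_0$ is only \emph{semi}conjugate, via $x=\cos2\pi\alpha$, $y=\cos2\pi\beta$, $z=\cos2\pi(\alpha+\beta)$, to the hyperbolic toral automorphism $\left[\begin{smallmatrix}2&1\\1&1\end{smallmatrix}\right]$, and the four singular points of $S_0$ are images of its fixed point; near these pinch points the naive coordinate cones degenerate. Two ways around this: (i) a persistence argument based at $S_0$ minus its singularities, where the dynamics is Anosov, while carefully tracking the compact pieces of $S_V$ that open out of the singularities of $S_0$ as $V$ increases from $0$; or (ii) Cantat's global viewpoint, compactifying $S_V$ to a smooth rational surface on which $T$ extends to an automorphism of first dynamical degree strictly larger than $1$, so that the structure theory for the non-escaping sets of loxodromic surface automorphisms applies and directly yields that $\Omega_V$ is a saddle hyperbolic Cantor set. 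Everything outside this core amounts to bookkeeping with standard tools of hyperbolic dynamics.
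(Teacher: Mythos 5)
This statement is not proved in the paper at all: it is imported verbatim from Cantat \cite{Cantat2009}, so the comparison here is between your sketch and the actual proof in the literature. Your outline correctly identifies the three ingredients (compactness/local maximality from the escape criterion, a hyperbolic splitting, and perfect total disconnectedness), and steps (1) and (3) are essentially standard once (2) is in hand: area preservation of $T\restriction S_V$ rules out attractors and repellers, so a locally maximal, infinite, transitive hyperbolic set on a surface is automatically a Cantor set by general structure theory, without needing an explicit Markov partition.

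The genuine gap is exactly where you flag the ``main obstacle,'' and neither of your proposed workarounds closes it. The explicit invariant-cone-field construction on a partition of the trapping region is known to work only for large $V$ (this is essentially Casdagli's argument \cite{Casdagli1986CMP}, which requires the coupling to be large), and the perturbative argument based at the singular Cayley cubic $S_0$ --- where the factor dynamics is the Anosov automorphism $\left[\begin{smallmatrix}2&1\\1&1\end{smallmatrix}\right]$ --- yields hyperbolicity only for $V$ in a neighborhood of $0$ \cite{DG2011}. The intermediate range of $V$ is precisely what resisted these real-variable methods, and covering \emph{all} $V>0$ uniformly is the content of Cantat's theorem. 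His actual proof is your option (ii), but carried out in earnest: one complexifies, views $T\restriction S_V$ as (a relative of) a polynomial automorphism of positive dynamical degree, shows that the set of points with bounded complex orbits is contained in the real surface $S_V(\bbR)$, and then invokes Bedford--Smillie-type rigidity (the measure of maximal entropy is carried by the real locus, forcing all complex Lyapunov exponents away from zero) to deduce uniform hyperbolicity of the non-escaping set. As written, your option (ii) simply appeals to ``the structure theory for non-escaping sets of loxodromic surface automorphisms,'' which in this context \emph{is} the theorem being proved; so the proposal is an accurate roadmap of the difficulty rather than a proof.
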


Through the stable manifold theory \cite{HP1970}, we know that for each $V\geq 0$ and each $p\in\Omega_V$,  the set 
$$W^{\strongstab}(p):=\left\{q : \lim\limits_{n\rightarrow\infty}\left|T^n(p)-T^n(q)\right|=0\right\}$$
is a smooth injectively immersed curve contained in $S_{V}$.
This curve is referred to as the \textit{strong stable manifold} at $p$. For $V\geq 0$, let us set 
\begin{equation}
    \Omega^+_{V}
    :=\left\{p\in S_{V} : \left(T^k(p)\right)_{k\in\bbN} \ \text{is bounded} \right\},
\end{equation}
and 
\begin{equation}
    \Omega^+:=\bigcup_{V> 0}\Omega_{V}^+ \ \text{and} \ \Omega_*^+:=\Omega^+\cup\Omega_0^+.
\end{equation}
Since $I(E,\lambda, \ell) \geq 0$ for all relevant $E$, $\lambda$, and $\ell$ (by Proposition~\ref{prop:x0x1FVIcalcs}) part of the conclusion of Theorem~\ref{t:spectrum bounded orbit} can be rephrased as follows:

\begin{theorem}\label{t: curve and bounded orbit equivalence}
    For each $\lambda$ and $\ell$, we have 
    $$E\in\Sigma_{\lambda}^{[\ell]} \Longleftrightarrow \gamma(E,\lambda,\ell)\in \Omega^+_*.$$
\end{theorem}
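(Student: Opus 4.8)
The plan is to obtain this as a direct corollary of Theorem~\ref{t:spectrum bounded orbit} together with the explicit computation in Proposition~\ref{prop:x0x1FVIcalcs}. The first step is to unwind the definition of $\Omega_*^+$: since $\Omega_*^+ = \Omega^+ \cup \Omega_0^+ = \bigcup_{V \geq 0} \Omega_V^+$, a point $p \in \bbR^3$ lies in $\Omega_*^+$ if and only if $\FVI(p) \geq 0$ and the forward orbit $(T^k p)_{k \in \bbN}$ is bounded. So the asserted equivalence will follow once I verify two things: (a) $\gamma(E,\lambda,\ell)$ always lies on a surface $S_V$ with $V \geq 0$, so that membership in $\Omega_*^+$ is governed purely by forward-orbit boundedness; and (b) boundedness of the forward orbit of $\gamma(E,\lambda,\ell)$ under $T$ is equivalent to $E \in \Sigma_\lambda^{[\ell]}$.

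For (a), Proposition~\ref{prop:x0x1FVIcalcs} gives $\FVI(\gamma(E,\lambda,\ell)) = \tfrac{\lambda^2}{4}\bigl[\tfrac{\xi^\ell - \xi^{-\ell}}{\xi - \xi^{-1}}\bigr]^2$ for $E = \xi + \xi^{-1}$ with $|\xi| \geq 1$. For real $E$ the bracketed quantity is real: if $|E| > 2$ then $\xi \in \bbR$, while if $|E| < 2$ then $\xi = e^{i\theta}$ and the bracket equals $\sin(\ell\theta)/\sin\theta \in \bbR$, with the removable singularities at $E = \pm 2$ handled by taking the obvious limit. Hence $\FVI(\gamma(E,\lambda,\ell)) \geq 0$ for every relevant $E$, so $\gamma(E,\lambda,\ell) \in S_V$ with $V = \FVI(\gamma(E,\lambda,\ell)) \geq 0$, and therefore $\gamma(E,\lambda,\ell) \in \Omega_*^+$ precisely when its forward orbit under $T$ is bounded.

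For (b), the Cayley--Hamilton recursion $x_{k+1} = 2x_k x_{k-1} - x_{k-2}$ yields $T^k(\gamma(E,\lambda,\ell)) = (x_{k+1}, x_k, x_{k-1})$ for all $k \geq 0$. Consequently $(T^k\gamma(E,\lambda,\ell))_{k \in \bbN}$ is bounded if and only if $\{x_k(E,\lambda,\ell)\}_{k \geq 1}$ is bounded — the two additional coordinates $x_0, x_{-1}$ being fixed numbers once $E$ is fixed — that is, if and only if $E \in B_\lambda^{[\ell]}$. Since Theorem~\ref{t:spectrum bounded orbit} gives $B_\lambda^{[\ell]} = \Sigma_\lambda^{[\ell]}$, combining (a) and (b) proves the equivalence. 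I do not expect a real obstacle here: this is essentially a bookkeeping translation between the trace-map orbit and the spectrum, with all the substantive input (the identity $\Sigma = B$ and the nonnegativity of $\FVI$ along the curve) already in hand; the only minor care needed is the continuity argument confirming $\FVI(\gamma(E,\lambda,\ell)) \geq 0$ at the edge points $E = \pm 2$.
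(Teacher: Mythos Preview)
Your proposal is correct and follows exactly the approach the paper itself indicates: the theorem is presented there not with a standalone proof but as an immediate rephrasing of the identity $\Sigma_\lambda^{[\ell]} = B_\lambda^{[\ell]}$ from Theorem~\ref{t:spectrum bounded orbit}, together with the observation (via Proposition~\ref{prop:x0x1FVIcalcs}) that $\FVI(\gamma(E,\lambda,\ell)) \geq 0$. Your write-up simply makes explicit the bookkeeping that the paper leaves implicit.
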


Through the use of center stable manifold theory, the following was deduced in \cite{DMY2013}.

\begin{prop}[Damanik--Munger--Yessen (2013) {\cite[Theorem 2.6]{DMY2013}}]
    There is a family of two-dimensional pairwise disjoint submanifolds of $\bbR^3$, denoted by $\mathcal W^\centstab$, such that 
    \begin{itemize}
        \item Each $W\in \mathcal W^\centstab$ is $T$-invariant.
        \item For each $p\in \Omega$, there is a unique $W(p)\in \mathcal W^\centstab$ such that $p\in W(p)$, and conversely, for each $W\in \mathcal W^\centstab$, $W\cap \Omega\neq\emptyset$.
        \item For each $V>0$ and $W\in \mathcal W^\centstab$, we have $W\cap S_{V}=W^{\strongstab}(p)$ for some $p\in \Omega_{V}$.
        \item For each $V>0$ and $W\in \mathcal W^\centstab$, $ W\cap S_{V}\neq \emptyset$ and this intersection is transverse.

        \item $\Omega^+=\bigsqcup_{W\in\mathcal W^\centstab} W$
    \end{itemize}
\end{prop}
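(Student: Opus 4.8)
Since this is a statement purely about the trace map $T(x,y,z)=(2xy-z,x,y)$ and the surfaces $S_V$, the plan is to deduce it from Cantat's hyperbolicity theorem together with (center-)stable manifold theory, with no input from the sieved Fibonacci operators. The first step is to promote the hyperbolicity of $T\restriction S_V$ on $\Omega_V$ (for $V>0$) to a \emph{partially} hyperbolic structure in the ambient $\bbR^3$. Here one uses that $T$ is a polynomial automorphism of $\bbR^3$ with $\lvert\det DT\rvert\equiv 1$ that preserves the foliation $\{S_V\}_{V\ge 0}$, so $T$ preserves Lebesgue measure on $\bbR^3$ and, through the coarea/Gelfand--Leray construction, a smooth area form on each $S_V$. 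Consequently the two Lyapunov exponents of $T\restriction S_V$ along $\Omega_V$ are negatives of one another; since the three ambient exponents sum to $\log\lvert\det DT\rvert=0$, the remaining one — along the line transverse to the surfaces $S_V$ — vanishes (e.g.\ at a periodic orbit the transverse eigenvalue has modulus one). Domination of this neutral direction by the hyperbolic ones then yields a continuous, $DT$-invariant line field $E^c$ transverse to $\{S_V\}$, hence a partially hyperbolic splitting $T\bbR^3 = E^s\oplus E^c\oplus E^u$ over $\bigcup_{V>0}\Omega_V$ in which $E^s\oplus E^u$ is tangent to the surfaces $S_V$ (one works over compact ranges of $V$ and patches).

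Second, I would invoke the center-stable manifold theorem \cite{HP1970} for this partially hyperbolic set: through each point of $\bigcup_{V>0}\Omega_V$ there is a locally $T$-invariant, pairwise disjoint, continuously varying family of two-dimensional manifolds tangent to $E^{cs}=E^s\oplus E^c$. Since $E^c$ is transverse to $S_V$ while $E^s$ lies in $TS_V$, each such leaf $W$ meets every nearby $S_V$ transversely in a curve tangent to $E^s$. The one-dimensional center manifold through the base point (tangent to $E^c$, obtained by intersecting the center-stable and center-unstable leaves) is transverse to all nearby $S_V$ and hence meets each of them in a single point $p_V$, which has bounded forward \emph{and} backward orbit and so lies in $\Omega_V$; the component of $W\cap S_V$ through $p_V$ is then precisely the strong stable manifold $W^{\strongstab}(p_V)$. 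In other words $W=\bigcup_V W^{\strongstab}(p_V)$ is a union of strong stable curves, one per level surface, which already gives local versions of all five assertions — in particular that every point of $W$ lies on the strong stable manifold of a point of some $\Omega_V$ and therefore has bounded forward orbit.

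Third comes the globalization: one must show these local pieces assemble into a family $\mathcal W^\centstab$ of \emph{globally} $T$-invariant two-dimensional manifolds with $\Omega^+=\bigsqcup_W W$. The main input is the escape behavior of the trace map (the same mechanism used in the proof of Theorem~\ref{t:spectrum bounded orbit}): a point of $S_V$ with bounded forward orbit must lie on the strong stable manifold of a point of $\Omega_V$, so for $V>0$ one has $\Omega_V^+=\bigsqcup_{p\in\Omega_V}W^{\strongstab}(p)$. Iterating the local center-stable leaves forward and backward, and using that strong stable manifolds of distinct points of a locally maximal hyperbolic set are disjoint, one glues the strong stable curves across varying $V$ into the asserted manifolds and inherits pairwise disjointness. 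The degenerate level $V=0$ must be adjoined separately: the surface $S_0$ is singular and $\Omega_0$ contains the singular point $(1,1,1)$, so $\Omega_0$ is not a hyperbolic set; one instead argues that the center-stable leaves through $\bigcup_{V>0}\Omega_V$ limit onto well-defined curves in $S_0$ and appends $\Omega_0^+$ to complete the family.

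The hard part is this last step, which is where the substance of \cite{DMY2013} lies: one must control the merely neutral (hence only eventually dominated) behavior along $E^c$ carefully enough to obtain honest invariant two-dimensional manifolds rather than local graphs that could reconnect under iteration, and one must handle the collapse of the hyperbolic structure as $V\downarrow 0$ onto the singular surface $S_0$. Everything preceding that is essentially a repackaging of Cantat's theorem through standard normally hyperbolic / center-stable manifold machinery.
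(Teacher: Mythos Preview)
The paper does not prove this proposition at all: it is quoted verbatim as \cite[Theorem~2.6]{DMY2013} and used as a black box, so there is no ``paper's own proof'' to compare against. Your sketch is therefore not a reconstruction of anything in the present paper but rather an outline of the argument in \cite{DMY2013}, and as such it is broadly on target: partial hyperbolicity over $\bigcup_{V>0}\Omega_V$ via the foliation-preserving, volume-preserving nature of $T$, followed by center-stable manifold theory and a globalization using the escape lemma, is indeed the route taken there.

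Two cautions are worth recording. First, your passage from ``the transverse eigenvalue has modulus one'' to a genuine $DT$-invariant center line field $E^c$ skips the domination estimate; what you actually get from the block-triangular structure (tangent to $S_V$ versus the quotient) is that the \emph{quotient} action is neutral, and one still has to produce an honest invariant subbundle $E^c$ complementary to $E^s\oplus E^u$ inside $T\bbR^3$ with uniform angle bounds --- this is exactly where one needs cone arguments and compactness in $V$, not just a Lyapunov-exponent count. Second, and as you correctly flag, the delicate content of \cite{DMY2013} is the globalization and the $V\downarrow 0$ limit: center-stable manifolds are a priori only locally invariant and not unique, and the singular surface $S_0$ destroys uniform hyperbolicity, so turning the local leaves into the honestly $T$-invariant, pairwise disjoint global family $\mathcal W^{\centstab}$ that also captures the $V=0$ level is not a routine patching argument. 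Your sketch is an accurate map of the terrain, but the proof proper lives in those two places.
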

The manifolds in $W^\centstab$ are \textit{center stable manifolds} for $T$ (see Section~2.2 and Appendix~A of \cite{DMY2013} for a formal discussion in the trace map context). 
On account of part~(1) of Theorem~\ref{t:sievedFibonacciDim1}, we are able to achieve:

\begin{prop}\label{p: local dimension is half of non wandering dimension}
    If $E\in \Sigma_{\lambda}^{[\ell]}$ and $\gamma(E, \lambda, \ell)\in S_{V_0}$, then if $V_0>0$, we have
    \begin{equation}\label{e: local dimension is half of non wandering dimension}
\dim_{\Hausdorff}^\loc(\Sigma_{\lambda}^{[\ell]},E)=\frac{1}{2}\dim_{\Hausdorff} \Omega_{V_0}.
    \end{equation}
    If $V_0=0$, then 
    \begin{equation}\label{e: local Hausdorff dimension is 1 for points on cayley cubic}
\dim_{\Hausdorff}^\loc(\Sigma_{\lambda}^{[\ell]},E)=1.
    \end{equation}
\end{prop}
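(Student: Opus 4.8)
The plan is to transfer the fractal dimension computation from the energy axis to the trace map surfaces via the curve of initial conditions $\gamma$, exploiting the fact that (by part~(1) of Theorem~\ref{t:sievedFibonacciDim1}) $\Sigma_\lambda^{[\ell]}$ is a Cantor set and hence, combined with Theorem~\ref{t: curve and bounded orbit equivalence}, that $\gamma$ maps $\Sigma_\lambda^{[\ell]}$ into $\Omega^+_*$ in a controlled way. The key structural input is the center stable lamination $\mathcal W^\centstab$ from the Damanik--Munger--Yessen proposition: near the point $p_0 = \gamma(E,\lambda,\ell) \in S_{V_0}$ with $V_0>0$, the intersection of the center stable manifold $W(p_0)$ with $S_{V_0}$ is exactly the strong stable manifold $W^{\strongstab}(p)$ of some $p \in \Omega_{V_0}$, and this intersection is transverse. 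So locally $\Omega^+$ is a ``product'' of the Cantor set $\Omega_{V_0}$ (sitting inside $S_{V_0}$, cut out transversally by the $\FVI$-level foliation) with the one-dimensional strong stable direction.

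First I would set up local coordinates near $p_0$ adapted to this picture: a smooth chart in which $\FVI$ is one coordinate (legitimate since $\nabla \FVI \neq 0$ near $p_0$ — this needs $V_0 > 0$, as on $S_0$ the surface is singular), the strong stable manifolds $W^{\strongstab}$ foliate the level sets, and the remaining transverse direction along each $S_V$ records the position within the hyperbolic Cantor set $\Omega_V$. Second, I would show that in these coordinates the curve $E \mapsto \gamma(E)$ is transverse to the center stable manifolds — equivalently, that $\frac{d}{dE}\FVI(E,\lambda,\ell) \neq 0$ at $E$, or more precisely that the projection of $\gamma$ along the strong stable foliation onto the transverse Cantor direction is a local diffeomorphism onto its image in $\Omega_{V_0}$. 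This transversality is the statement that $\Sigma_\lambda^{[\ell]}$ near $E$ is a bi-Lipschitz copy of a neighborhood of a point in $\Omega_{V_0}$, which immediately gives $\dim_\Hausdorff^\loc(\Sigma_\lambda^{[\ell]},E) = \dim_\Hausdorff(\Omega_{V_0}$ intersected with a transversal$)$. Third, I would invoke that for the locally maximal hyperbolic set $\Omega_{V_0}$ of the surface diffeomorphism $T\restriction S_{V_0}$, the Hausdorff dimension is the sum of the (pointwise-constant, by the bounded distortion / $C^1$ surface dynamics) stable and unstable dimensions, which are equal by the reversibility (the trace map is conjugate to its inverse via $(x,y,z)\mapsto(z,y,x)$), so a transversal to the lamination meets $\Omega_{V_0}$ in a set of dimension exactly $\frac12\dim_\Hausdorff \Omega_{V_0}$. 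Combining yields \eqref{e: local dimension is half of non wandering dimension}.

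For the case $V_0 = 0$: here $\gamma(E)$ lies on the Cayley cubic $S_0 = \{\FVI = 0\}$, which is the semialgebraic surface with four singular points but is otherwise smooth, and on which $T\restriction S_0$ is, away from the singularities, smoothly conjugate to a hyperbolic toral-type or parabolic-channel map whose nonwandering set is a single periodic orbit together with its connecting separatrices. The point is that $E \in \Sigma_\lambda^{[\ell]}$ with $V_0 = 0$ forces $\FVI(E,\lambda,\ell) = \frac{\lambda^2}{4}\big[\frac{\xi^\ell - \xi^{-\ell}}{\xi - \xi^{-1}}\big]^2 = 0$ by Proposition~\ref{prop:x0x1FVIcalcs}, i.e., $\xi^{2\ell} = 1$, so $E = 2\cos(\pi j/\ell)$ for some integer $j$; these are isolated points. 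I would show that near such an $E$, the curve $\gamma$ crosses $S_0$ transversally into the region $V>0$, and that an entire one-sided interval $(E, E+\varepsilon)$ (or $(E-\varepsilon,E)$) maps into $\Omega^+$ via strong stable manifolds of points ranging over a \emph{full} neighborhood within $\Omega_V$'s ambient surface — in fact, because the strong stable manifolds accumulate on the $V=0$ separatrix which itself is a one-dimensional curve lying in $\Sigma$, and because the family $\Omega_V$ has dimension $\to$ a full interval's worth as $V \to 0^+$ in the right sense, one gets that $\Sigma_\lambda^{[\ell]}$ contains a set of full local dimension $1$ at $E$. Concretely: the stable manifold of the $V=0$ fixed point is a curve $\Gamma \subseteq S_0$ with $\gamma(E) \in \Gamma$, and a neighborhood of $\gamma(E)$ in $\Gamma$ is contained in $\Omega_*^+$; pulling back by the local diffeomorphism $\gamma$ shows a neighborhood of $E$ in $\Sigma_\lambda^{[\ell]}$ contains a set with a $1$-dimensional ($C^1$-curve) piece, giving \eqref{e: local Hausdorff dimension is 1 for points on cayley cubic}.

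The main obstacle I expect is the $V_0 = 0$ case: the Cayley cubic is singular and the hyperbolicity of $T$ degenerates there (the fixed point on $S_0$ is not hyperbolic — it has a parabolic/neutral channel), so the clean stable manifold theory used for $V_0>0$ does not apply verbatim, and one must carefully analyze the local dynamics of $T$ near the $\FVI = 0$ locus (as is done in the standard-Fibonacci literature, e.g., the analysis around the periodic orbit of period $6$ on the Cayley cubic) to certify that a genuine $C^1$ curve of bounded orbits passes through $\gamma(E)$ and that $\gamma$ is transverse to it. A secondary technical point is verifying uniformity of the stable/unstable dimensions and the bi-Lipschitz transfer — i.e., that $\dim_\Hausdorff^\loc$ is well-defined and matches $\frac12 \dim_\Hausdorff \Omega_{V_0}$ rather than just bounding it — which should follow from bounded distortion for the $C^{1+\alpha}$ surface dynamics and the dimension formula of McCluskey--Manning / Palis--Takens for horseshoes on surfaces, but requires care since $V_0$, and hence the hyperbolic set, varies with $E$ along $\Sigma_\lambda^{[\ell]}$.
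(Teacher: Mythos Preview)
Your $V_0>0$ outline is essentially a reconstruction of the machinery of Damanik--Munger--Yessen \cite{DMY2013}, which the paper simply invokes as a black box (their Lemma~2.9 and Theorem~2.13). One point deserves correction: you propose to check transversality of $\gamma$ to the center stable leaf at the given $E$, tentatively via $\tfrac{d}{dE}\FVI(E,\lambda,\ell)\neq 0$. That condition is neither equivalent to the needed transversality (it detects transversality to the level surface $S_{V_0}$, whereas the two-dimensional center stable leaf is itself transverse to $S_{V_0}$) nor necessarily true at every spectral $E$. The paper avoids this by a soft argument: since $\Sigma_\lambda^{[\ell]}$ is a Cantor set, $\gamma^\star\cap\Omega^+_*$ has no intervals and no isolated points, so the analytic curve $\gamma^\star$ cannot lie inside any single center stable manifold nor entirely in $S_0$; analyticity then forces tangencies with the lamination to be isolated, giving a transversal intersection in every neighborhood (\cite[Lemma~2.9]{DMY2013}), and \cite[Theorem~2.13]{DMY2013} delivers the dimension formula, absorbing the isolated tangencies.

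Your $V_0=0$ argument has a genuine gap. You correctly note that the energies with $\FVI(E,\lambda,\ell)=0$ are isolated, so locally $\gamma$ meets $S_0$ only at the single point $\gamma(E)$. But then the ``Concretely'' step pulls back a neighborhood of $\gamma(E)$ in a stable curve $\Gamma\subset S_0$ via $\gamma$ to produce a $C^1$-curve piece inside $\Sigma_\lambda^{[\ell]}$. This cannot work: $\gamma$ maps no nontrivial $E$-interval into $S_0$, and in any case $\Sigma_\lambda^{[\ell]}$ has Lebesgue measure zero (Theorem~\ref{t:sievedFibonacciDim1}(2)), so it contains no interval. The correct mechanism---which you gesture at earlier in the same paragraph but then abandon---is that nearby spectral energies $E'$ satisfy $\gamma(E')\in S_{V(E')}$ with $V(E')\to 0^+$, and $\tfrac12\dim_\Hausdorff\Omega_{V(E')}\to 1$ by the continuity in Proposition~\ref{p:Asymptotics of dimension of non wandering}. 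Upgrading this to $\dim_\Hausdorff^\loc(\Sigma_\lambda^{[\ell]},E)=1$ requires a uniform estimate as $V\to 0^+$, which is precisely \cite[Theorem~2.17]{DMY2013} as cited by the paper; the degeneration of hyperbolicity on $S_0$ that you flag as the main obstacle is handled there, not by finding a curve in the spectrum.
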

\begin{proof}
By Theorem~\ref{t: curve and bounded orbit equivalence}, the spectrum $\Sigma_{\lambda}^{[\ell]}$ is diffeomorphic to $\gamma^\star\cap \Omega^+_*$, where 
\[ \gamma^\star:=
\{ \gamma(E,\lambda, \ell) : E \in I\}\]
and $I$ is the convex hull of $\Sigma_{\lambda}^{[\ell]}$. Here, $\gamma^\star$ is a compact analytic curve, and by part~(1) of Theorem~\ref{t:sievedFibonacciDim1}, its intersection with $\Omega^+_*$ is a Cantor set with respect to the subspace topology on $\gamma^\star$. In particular, $\gamma^\star\cap \Omega^+_*$ contains no isolated points and no intervals. 

From the previous proposition, this means that $\gamma^\star$ does not lie inside of any single center stable manifold $W$ from $\mathcal W^{\centstab}$ and it is not completely contained in $S_0$. So, from \cite[Lemma~2.9]{DMY2013}, for any non-empty neighborhood $U\subset \gamma^\star\cap \Omega^+$, there is a point of transverse intersection with some center stable manifold $W$. Now, \cite[Theorem~2.13]{DMY2013} immediately implies (\ref{e: local dimension is half of non wandering dimension}), and \cite[Theorem~2.17]{DMY2013} immediately implies (\ref{e: local Hausdorff dimension is 1 for points on cayley cubic}).
\end{proof}

\begin{remark}
    It is actually the case that for each $V\geq 0$,
    \begin{equation}
d^s_{V}=d^u_{V}=\frac{1}{2}\dim_{\Hausdorff} \Omega_{V},
\end{equation}
where $d^s_{V}$ (resp.\ $d^u_{V}$) is the stable (resp.\ unstable) dimension of $\Omega_{V}$ (see Chapter 9 of \cite{B2012} for a detailed discussion of (un)stable dimension of hyperbolic sets). 
That these dimensions coincide follows from the dimension formulas given in \cite{MM1983}, the fact that $T\restriction_{S_{V}}$ is area preserving, and compactness of $\Omega_V$.
\end{remark}

From \cite{P2015, DG2011, DEGT2008}, we have the following useful facts.
\begin{prop}\label{p:Asymptotics of dimension of non wandering}
    The map $F:[0,\infty)\rightarrow [0,1]$ defined by $V\mapsto \frac{1}{2}\dim_{\Hausdorff}\Omega_V$ satisfies the following:
    \begin{itemize}
    \item F is real analytic on $(0,\infty)$
        \item $F$ is continuous at $0$ and $F(0)=1$
        \item $F(V)\in(0,1)$ for all $V>0$
        \item $\lim\limits_{V\rightarrow \infty} F(V)=0$ and in particular, 
        \begin{equation} 
        \lim\limits_{V\rightarrow \infty} F(V)\log V = \log(1+\sqrt{2}).
        \end{equation}
    \end{itemize}
\end{prop}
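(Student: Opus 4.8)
The plan is to deduce all four assertions from known facts about the standard Fibonacci Hamiltonian and the trace map, via the bridge already assembled in the preceding results. The key observation is that the case $\ell=1$ is special: by Proposition~\ref{prop:x0x1FVIcalcs}, $\FVI(E,\lambda,1)\equiv\lambda^2/4$ independently of $E$, so for fixed $\lambda$ the entire curve $\gamma(\cdot,\lambda,1)$ lies on the single surface $S_{\lambda^2/4}$. Applying Theorem~\ref{t: curve and bounded orbit equivalence} together with Proposition~\ref{p: local dimension is half of non wandering dimension}, with $V_0=\lambda^2/4>0$ (the same value at every energy in the spectrum), yields
\begin{equation}
\dim_\Hausdorff\Sigma_\lambda^{[1]}=\tfrac12\dim_\Hausdorff\Omega_{\lambda^2/4}=F\!\left(\lambda^2/4\right),
\end{equation}
i.e.\ $F(V)=\dim_\Hausdorff\Sigma_{2\sqrt V}^{[1]}$ for every $V>0$. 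Since $\Sigma_\lambda^{[1]}$ is precisely the spectrum of the standard Fibonacci Hamiltonian at coupling $\lambda$, each bullet becomes a statement about $\lambda\mapsto\dim_\Hausdorff\Sigma_\lambda$ (or directly about $V\mapsto\tfrac12\dim_\Hausdorff\Omega_V$) that is supplied by \cite{P2015,DG2011,DEGT2008} and transported along the real-analytic change of variables $\lambda=2\sqrt V$.

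With this dictionary, I would check the items one at a time. \emph{Real analyticity on $(0,\infty)$}: $\{T\restriction S_V\}_{V>0}$ is a real-analytic family of $C^\omega$ surface diffeomorphisms whose locally maximal hyperbolic sets $\Omega_V$ (Cantat's theorem, \cite{Cantat2009}) persist and depend real-analytically on $V$ once the $S_V$ are placed in a common analytic chart away from the singular locus of the cubic; the transfer-operator/thermodynamic-formalism arguments of \cite{P2015} then give real-analyticity of $V\mapsto\dim_\Hausdorff\Omega_V$, equivalently of $\lambda\mapsto\dim_\Hausdorff\Sigma_\lambda$ composed with $V\mapsto 2\sqrt V$. \emph{Continuity at $0$ and $F(0)=1$}: on the relevant component of the Cayley cubic $S_0$ the trace map is smoothly conjugate to a hyperbolic automorphism of $\mathbb{T}^2$, so every orbit is bounded, $\Omega_0$ is two-dimensional, and $F(0)=1$; continuity from the right is the weak-coupling limit $\lim_{\lambda\to0^+}\dim_\Hausdorff\Sigma_\lambda=1$ of \cite{DEGT2008,DG2011}. \emph{$F(V)\in(0,1)$ for $V>0$}: $\Omega_V$ is a nontrivial locally maximal hyperbolic set, hence contains a horseshoe and has positive Hausdorff dimension ($F(V)>0$), while its stable and unstable slices are $C^{1+\alpha}$ dynamically defined Cantor sets with gaps, so each has dimension strictly below one and $\dim_\Hausdorff\Omega_V<2$ ($F(V)<1$); equivalently, $0<\dim_\Hausdorff\Sigma_\lambda<1$ for all $\lambda>0$ as in \cite{DEGT2008,DG2011}. \emph{Strong-coupling asymptotics}: the dimension of the spectrum decays to zero in the large-coupling limit with the sharp rate $\lim_{\lambda\to\infty}\dim_\Hausdorff\Sigma_\lambda\cdot\log\lambda=\tfrac12\log(1+\sqrt2)$ (\cite{DEGT2008,P2015}); since $\log V=2\log\lambda-\log 4$ and $F(V)\to0$, this gives $\lim_{V\to\infty}F(V)\log V=\log(1+\sqrt2)$ and in particular $\lim_{V\to\infty}F(V)=0$.

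I expect the main obstacle to be the pair consisting of real-analyticity across \emph{all} of $(0,\infty)$ and the sharp strong-coupling constant $\log(1+\sqrt2)$: these are the genuinely deep inputs. The former requires verifying that $\Omega_V$ stays uniformly away from the singularities of $S_V$, so that hyperbolic continuation and the pressure-function machinery apply uniformly on compact $V$-ranges; the latter requires a careful reading — and renormalization bookkeeping — of the strong-coupling analysis of the trace map. A secondary subtlety is the endpoint $V=0$, where the ambient geometry degenerates ($S_V$ limits onto the singular Cayley cubic and the hyperbolicity and distortion constants of $\Omega_V$ deteriorate), so $F(0)=1$ and continuity there must be imported from the weak-coupling theory together with the toral-automorphism description at $V=0$, rather than read off the analytic formula on $(0,\infty)$. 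Finally, one should confirm the normalization of the coupling constant used in each cited reference before extracting the constant $\log(1+\sqrt2)$.
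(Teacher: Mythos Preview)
The paper does not actually prove this proposition: it is introduced with the sentence ``From \cite{P2015, DG2011, DEGT2008}, we have the following useful facts'' and stated without argument. Your proposal is therefore more than the paper provides, but it is in the same spirit---everything ultimately rests on those same citations.

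Your added layer is the $\ell=1$ dictionary $F(V)=\dim_\Hausdorff\Sigma_{2\sqrt V}^{[1]}$, obtained by combining $\FVI(E,\lambda,1)\equiv\lambda^2/4$ with Proposition~\ref{p: local dimension is half of non wandering dimension}. This is correct and not circular (Proposition~\ref{p: local dimension is half of non wandering dimension} does not invoke the present proposition), but it is also not strictly necessary: the cited works already formulate many of these statements directly for the trace-map non-wandering set $\Omega_V$, so one can quote them without detouring through the spectral dimension of the Fibonacci Hamiltonian. Either route is fine, and your own closing caveat about checking normalizations before extracting the constant $\log(1+\sqrt2)$ is apt.
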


We can now use hyperbolicity and its consequences to prove all of the main results.
\begin{proof}[{\bf Proof of Theorem~\ref{t:sievedFibonacciDim1}, part (3).}]
    Let $\lambda>0$ and $\ell\geq 2$. 
    We show that there for some $E\in \Sigma_{\lambda}^{[\ell]}$, we have $\gamma(E,\lambda,\ell)\in S_0$, so result will follow from Proposition~\ref{p: local dimension is half of non wandering dimension}. 
    
Consider $\theta = \pi j/\ell$ for an integer $1 \le j \le \ell-1$ and $E = 2\cos\theta$.
In view of Proposition~\ref{prop:x0x1FVIcalcs}, we see that 
\begin{equation} 
\FVI(E,\lambda , \ell) = \frac{\lambda^2}{4} \frac{\sin^2\ell\theta}{\sin^2\theta} = 0,
\end{equation}
so it suffices to show that $E \in \Sigma_\lambda^{[\ell]}$.
    Here, we observe that $M_0(E,\lambda,\ell)=\idty_{2\times 2}$ and
    \begin{equation}
        x_1(E,\lambda,\ell) = \cos\ell\theta-\frac{\lambda}{2}\frac{\sin\ell\theta}{\sin\theta} = \cos\ell\theta = \pm 1.
    \end{equation}
    Consequently, for any $\omega \in \Omega$, there is a solution of $H_\omega u = Eu$ that is bounded, and therefore $E \in \Sigma_\lambda^{[\ell]}$ as desired.
\end{proof}

\begin{proof}[{\bf Proof of Theorem~\ref{t:local Hausdorff dimension tends to 0}}]
    Let $\lambda>4$ be given, and denote $K = [\lambda-2,\lambda+2]$. 
    By standard perturbative estimates (e.g.\ \cite[Lemma~5.2.4]{DF2024ESO2}), it follows that
    \begin{equation}
        K \cap \Sigma_\lambda^{[\ell]} \neq \emptyset
    \end{equation}
    for every $\ell \in \bbN$.
    On the other hand, one can readily check that $I(E,\lambda,\ell) \to \infty$ uniformly on $K$ as $\ell \to\infty$. 
    Indeed, writing $E>2$ as $E = 2\cosh\theta$ with $\theta>0$, we have for $E \in K$:
    \begin{align*}
        I(E,\lambda,\ell)
         = \frac{\lambda^2}{4} \frac{\sinh^2(\ell\theta)}{\sinh^2(\theta)} 
         \geq \frac{\lambda^2}{4} \frac{\sinh^2(\ell \operatorname{arccosh}(\frac{\lambda}{2}-1))}{\sinh^2( \operatorname{arccosh}(\frac{\lambda}{2} + 1))},
    \end{align*}
    which goes to $\infty$ as $\ell \to \infty$.
    As before then, the theorem follows from Proposition~\ref{p: local dimension is half of non wandering dimension} and Proposition ~\ref{p:Asymptotics of dimension of non wandering}. 
\end{proof}

\begin{proof}[{\bf Proof of Theorem~\ref{t:main}}]
   This follows directly from Theorem~\ref{t:sievedFibonacciDim1} with $H = H_{\lambda,\omega}$ for any $\omega$ and $\lambda>0$.
\end{proof}

\begin{proof}[{\bf Proof of Theorem~\ref{t:main2}}]
    This follows directly from Theorem~\ref{t:local Hausdorff dimension tends to 0}.
\end{proof}

\bibliographystyle{abbrv}
\bibliography{refs}

\end{document}